\def\init{\setcounter{equation}{0}}
\newtheorem{theorem}{Theorem}[section]
\newtheorem{proposition}[theorem]{Proposition}
\newtheorem{lemma}[theorem]{Lemma}
\def\zz{\mathbb{Z}}
\def\nn{\mathbb{N}}
\def\rr{\mathbb{R}}
\def\fav{\hbox{Fav}}
\begin{document}

\title{The Favard length of product Cantor sets}
\author{Izabella {\L}aba and Kelan Zhai}

\maketitle

\begin{abstract}
Nazarov, Peres and Volberg proved recently in \cite{NPV} that the Favard length of the
$n$-th iteration of the four-corner Cantor set is bounded from above 
by $n^{-c}$ for an appropriate $c$.  We generalize this result to all product Cantor sets
whose projection in some direction has positive 1-dimensional measure.
\end{abstract}


\section{Introduction}\label{intro}

\init


Suppose that $A,B\subset \{0,1,\dots,K-1\}$, $|A|\cdot|B|=K$, and that both
$A$ and $B$ have cardinalities greater than 1.  We consider the 
Cantor set obtained by dividing the unit square in $\rr^2$ into $K^2$ congruent squares, keeping
those squares whose bottom left vertex is in $K^{-2}(A\times B)$ and deleting
the rest, then iterating the procedure. We use $E_n$ to denote the set obtained
after $n$ iterations, consisting of $K^n$ squares of sidelength $K^{-n}$.  Let also
$$
E_\infty=\bigcap_{n=1}^\infty E_n.
$$
Then $E_\infty$ is a closed set of Hausdorff dimension 1; moreover, it has finite and non-zero 
1-dimensional Hausdorff measure. It is furthermore a product of two Cantor sets $A_\infty$ and
$B_\infty$ of dimension $\alpha=\frac{\log |A|}{\log K}$ and 
$\beta=\frac{\log |B|}{\log K}$, respectively.  By our assumptions on $A,B$, 
\begin{equation}\label{e-gamma}
\gamma:=\min(\alpha,\beta)> 0.
\end{equation}  
It follows from a theorem of Besicovitch (see e.g. \cite{Falc})
that $E_\infty$ is purely unrectifiable,
hence almost every linear projection of $E_\infty$
has measure 0.  More precisely, let 
$\pi_\theta$ be the orthogonal projection in the direction $\theta$:
$$\pi_\theta(x,y)=x\cos\theta+y\sin\theta,$$
and define the {\em Favard length} of a set $\Omega\subset \rr^2$ by
$$\fav(\Omega)=\int_0^\pi |\pi_\theta(\Omega)|d\theta.$$
The Besicovitch projection theorem states that if $\Omega$ is purely unrectifiable, then
$\fav(\Omega)=0$.

We will be interested in quantitative versions of this statement.  By the dominated
convergence theorem, we have
\begin{equation}\label{ff-e1}
\lim_{\delta\to 0}\fav(\Omega^\delta)=0,
\end{equation}
where $\Omega^\delta$ denotes the $\delta$-neighbourhood of $\Omega$.
The question of interest concerns the rate of decay in (\ref{ff-e1}).  
Mattila \cite{Mattila} proved that for a very general class of 1-dimensional sets
$\Omega\subset \rr^2$ we have $\fav(\Omega^\delta)\geq C(\log(1/\delta))^{-1}$.
This includes as a special case the estimate
$$\fav(E_n)\geq Cn^{-1}$$
for the sets defined above.

Peres and Solomyak \cite{PS1} obtained the
first quantitative upper bound in (\ref{ff-e1}) for $E_n$ defined as above,
proving that
$$\fav(E_n)\leq C\exp(-C\log^*n),$$
where $\log^*n$ denotes the number of times that one must iterate the $\log$ function to get
from $n$ to 1.  Their result holds in fact for more general self-similar sets obeying the
``strong separation condition"; this includes for instance 1-dimensional Cantor type sets
in $\rr^2$ which are not product sets.
More recently, Tao \cite{Tao} proved
a quantitative version of (\ref{ff-e1}) under very general conditions on $X$. His 
upper bound is very weak (with convergence much slower that $\exp(-C\log^*n)$) 
and it is not clear whether this can be improved without additional assumptions.

The special case of the 4-corner Cantor
set ${\cal C}_n$ (defined as above with $K=4$, $A=B=\{0,3\}$) 
has been investigated in more detail.
In a recent breakthrough, Nazarov, Peres and Volberg \cite{NPV} proved
that 
$$\fav({\cal C}_n)\leq C_\epsilon n^{-\frac{1}{6}+\epsilon}$$
for any $\epsilon>0$.  
In the converse direction, Bateman and Volberg \cite{BV} proved that
$$\fav({\cal C}_n)\geq C n^{-1}\log n,$$
showing that at least in this case Mattila's lower bound is not optimal.  
However, Peres and Solomyak prove in \cite{PS1} that for ``random 4-corner sets"
${\cal R}_n$ we do have $\fav({\cal R}_n)\leq Cn^{-1}$. 

Our main result extends the estimate of \cite{NPV} to a much more general class of
product Cantor sets satisfying a natural geometrical condition.

\begin{theorem}\label{tiling-thm}
Let $E_n$ be defined as above.  Assume moreover that there is a direction $\theta_0$
such that the projection $\pi_{\theta_0}(E_\infty)$ has positive 1-dimensional
measure.
Then there is a $p\in(6,\infty)$ 
(depending on the choice of $K,A,B$) such that
$$
\fav(E_n)\leq C n^{-1/p}.
$$ 
\end{theorem}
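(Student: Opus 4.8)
The plan is to follow the broad strategy of Nazarov–Peres–Volberg \cite{NPV}, which reduces the Favard length estimate to a combinatorial/analytic statement about the number of ``slabs'' needed to cover the relevant self-similar structure, and to use the positive-projection hypothesis in direction $\theta_0$ as the source of the quantitative gain. First I would recall that $\fav(E_n)$ is small precisely when, for most directions $\theta$, the projection $\pi_\theta(E_n)$ covers only a small fraction of its hull; by a standard $L^1$--$L^2$ argument (Cauchy–Schwarz against the multiplicity function $f_{n,\theta}=\sum \mathbf 1_{\pi_\theta(Q)}$ over the $K^n$ squares $Q$ of $E_n$), it suffices to show that $\int_0^\pi \|f_{n,\theta}\|_2^2\, d\theta$ grows like a power $n^{1-1/p}$ times the trivial lower bound, i.e. that the projections have large overlaps for a substantial set of directions. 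Equivalently one shows that the set of ``bad'' directions --- those in which the projection is nearly injective --- has measure $\lesssim n^{-1/p}$.

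The heart of the argument is the self-similarity: $E_{n}$ is a union of $K$ rescaled copies of $E_{n-1}$, so $\pi_\theta(E_n)$ is a union of $K$ translated copies of $K^{-1}\pi_\theta(E_{n-1})$, with translation vector determined by $\theta$ and the position $(a/K,b/K)$, $a\in A$, $b\in B$. This sets up a renewal/iteration scheme: a direction stays ``good'' at scale $n$ only if at each of the $n$ stages the $|A|\cdot|B|=K$ translated sub-copies remain essentially disjoint after projection. The key input is that, by the hypothesis, $\pi_{\theta_0}(E_\infty)$ has positive length, hence (since it is a self-similar subset of $\rr$ satisfying the open set condition on the real line with ratio $K^{-1}$ and $K$ maps) the digit set $A\cos\theta_0+B\sin\theta_0 \pmod{\text{period}}$ must actually \emph{tile} a line segment by translations --- this is the ``tiling'' alluded to in the theorem's label. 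Tiling is a rigid condition: around $\theta_0$ it forces the projected copies to overlap heavily for nearby $\theta$, because a tiling has no ``slack'', so perturbing the direction makes adjacent tiles collide. I would make this quantitative: there is an interval $I$ around $\theta_0$ and a constant $c>0$ such that for $\theta\in I$, at each iteration a definite proportion of the $K$ sub-copies overlap in projection, which kills a factor $(1+c)$ of the measure of surviving good directions per scale. Summing the geometric decay over $n$ scales, together with covering all of $[0,\pi]$ by finitely many rotated copies of such intervals (or handling the complement by the trivial bound), yields that the good set has measure $\lesssim \rho^{\,\ell}$ where $\ell \sim \log n$ is the number of usable scales, giving a power bound $n^{-1/p}$ with $p$ depending on $c$, hence on $K,A,B$.

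Concretely the steps are: (1) set up the $L^2$ reduction and the multiplicity function $f_{n,\theta}$; (2) record the self-similar decomposition $f_{n,\theta} = \sum_{a\in A, b\in B} (\text{shift})\, f_{n-1,\theta}$ and the resulting recursion for the measure of good directions; (3) use the positive-length projection hypothesis to deduce the exact tiling of an interval by $\{a\cos\theta_0 + b\sin\theta_0\}$, invoking the structure theory of self-similar tilings of $\rr$ (Kenyon / Lagarias–Wang type results, or the elementary fact that a self-similar set of positive measure with $K$ maps of ratio $1/K$ tiles); (4) prove a quantitative ``overlap persistence'' lemma near $\theta_0$: tiling implies that for $|\theta-\theta_0|<\eta K^{-j}$ the level-$j$ copies still overlap; (5) iterate to get geometric decay of the good-direction measure over $\sim \log n$ scales and convert to the $n^{-1/p}$ bound, covering $[0,\pi]$ by a bounded number of such neighbourhoods. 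The main obstacle I anticipate is step (4)–(5): turning the \emph{qualitative} rigidity of tilings into a \emph{uniform, scale-by-scale} quantitative overlap estimate that survives under small perturbations of $\theta$ and compounds correctly through the iteration --- in particular controlling the constants so that $p$ stays finite and does not degrade with $n$. A secondary technical point is that the projection of $E_\infty$ having positive measure does not immediately force a \emph{segment} to be tiled (the support could a priori be more complicated), so some care is needed to extract a genuine interval of tiling, possibly after passing to a further iterate $E_m$ of the construction.
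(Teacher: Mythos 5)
Your proposal correctly identifies the two structural ingredients — the Nazarov–Peres–Volberg framework with a counting/multiplicity function, and the Kenyon / Lagarias–Wang tiling theorem as the way to exploit the positive-projection hypothesis. But the mechanism by which you propose to use the tiling is not the one in the paper, and as stated it would not close.

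The central misconception is the ``overlap persistence near $\theta_0$'' step. Tiling at $\theta_0$ means the projections at $\theta_0$ are essentially \emph{disjoint}, and perturbing $\theta$ only creates overlaps of size comparable to $|\theta-\theta_0|$, not a ``definite proportion'' per scale; moreover, whatever you extract near $\theta_0$ says nothing about directions far from $\theta_0$. The remark about ``covering $[0,\pi]$ by finitely many rotated copies of such intervals'' has no content: the Cantor set is axis-aligned, there is no rotational symmetry, and there is typically a bounded number (often just one, up to trivial symmetries) of tiling directions. The complementary ``trivial bound'' on the remaining directions is $O(1)$ per direction and gives nothing. Finally, a genuine scale-by-scale geometric decay over $\sim\log n$ scales would give at best an $\exp(-c\log^* n)$ or logarithmic bound (this is essentially the Peres–Solomyak regime); the power law requires the finer bootstrapping of NPV via the distribution function $m(\lambda)$ of the maximal multiplicity, together with the pigeonhole in frequency scales, not an iteration per scale.

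What the paper actually does with the tiling is quite different and entirely non-local in $\theta$. It is an \emph{algebraic} input: the existence of a tiling direction $\theta_0$ with $\tan\theta_0=q/r$ forces the translate set built from $A^m$, $B^m$ to be a complementing set modulo $MK^m$, which gives the generating-function identity (\ref{ss-e5}). That identity controls, for \emph{every} frequency $y$, where $|A^m(e^{-2\pi i y})|$ can be small: the approximate zero set is contained in a union of $O(1)$ arcs plus a thin neighbourhood of $rM^{-1}\mathbb{Z}$ (Lemma \ref{ss-lemma1}). This zero-set estimate is then inserted into the NPV scheme uniformly over all $t=\tan\theta\in[0,1]$ to prove Lemma \ref{zz.lemma1}, which bounds $\int_{Z_\delta}|\widehat{\nu^n}|^2$ and replaces the explicit Riesz-product computation available only for the four-corner set. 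The direction $\theta_0$ never appears again in the Fourier analysis; its sole role is to produce the polynomial factorization. So the gap in your proposal is precisely step (4): you would need to replace the hoped-for local ``overlap rigidity near $\theta_0$'' with the global algebraic zero-set estimate that the tiling provides, and then plug it into the NPV $L^2$-Fourier machinery rather than a scale-by-scale decay argument.
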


The explicit range of $p$ in Theorem \ref{tiling-thm} is as follows.  By the results
of Kenyon \cite{Ke} and Lagarias-Wang \cite{LW1}, if $\pi_{\theta_0}(E_\infty)$ has 
positive 1-dimensional measure, $t_0:=\tan\theta_0$ must be rational.  Write 
$t_0=q/r$ in lowest terms.  Then the conclusion of our theorem holds for
all $p>6+4(1+q+r)\gamma^{-1}$, where $\gamma$ was defined in (\ref{e-gamma}). 
We do not
expect this range of $p$ to be optimal, in particular it may be possible to improve it
by proving the analogue of the ``stronger" harmonic-analytic estimate of
\cite{NPV}, but it is unlikely that an optimal estimate could be obtained
without developing substantially new methods.

Note that the 4-corner Cantor set has the property in question, with $\theta_0=\tan^{-1}
(2)$. This property also turns out to be  
crucial for the proof of the lower bound in \cite{BV}.
We expect this to be the ``worst" case in the sense that if all projections
of a 1-dimensional Cantor set $E_\infty$ have 1-dimensional measure 0,
then Mattila's lower bound should be sharp, i.e. 
$\fav(E_n)\leq Cn^{-1}$.  However, we do not know how to prove it at this point.

The general scheme of our proof follows that of \cite{NPV}.  Furthermore, many arguments
of \cite{NPV} can be adapted to our setting with only very minor changes,
including the entire self-similarity argument in \cite[Section 3]{NPV}
and some of the harmonic-analytic estimates in \cite[Section 2]{NPV}.
We only sketch those parts very briefly.  

The crucial difficulty in extending the bound of \cite{NPV} to more general sets is that
the main harmonic-analytic inequality in \cite[Section 2]{NPV}
(Proposition \ref{dd-prop1} in our paper) is based in part on an explicit evaluation of a 
certain ``Riesz product" in a closed form.  This calculation is specific to the
case of the 4-corner Cantor set and does not extend to the general case.

The starting point for our approach is the connection between tiling questions 
and projections of self-similar sets, first pointed out by Kenyon \cite{Ke}
and then developed further by Lagarias and Wang \cite{LW1}.
Kenyon proved that the projection of a 1-dimensional 
``Sierpi\'nski gasket" in a direction $\theta$ with $t=\tan\theta$ has positive length
if and only if $t=p/q$ in lowest terms, where the set $\{1,p,q\}$ satisfies
an appropriate tiling condition.  Lagarias and Wang \cite{LW1} extend this argument 
to prove a more general result which implies in particular the rationality of the
slope $t_0$ as stated above.  

To the best of our knowledge, tiling conditions have
not been used previously in connection with the quantitative Favard length problem
under consideration. 
It turns out, though, that the analysis in \cite{Ke}, \cite{LW1}
can indeed be pushed further to get additional information about the finite iterations of 
our Cantor set.  The relevant result of Kenyon and Lagarias-Wang is that 
$\pi_{\theta_0}(E_\infty)$ tiles $\rr$ by translations.  The same is not necessarily true
for the finite iterations $\pi_{\theta_0}(E_n)$; however, with a little more care
we can identify an appropriate tiling of $\rr$ associated with such iterations.
Analyzing the structure of this tiling in more detail, and
rewriting this information in terms of generating functions, we are able to produce
a good substitute for the trigonometric identities used in \cite{NPV}.

The paper is organized as follows.  We establish the notation and various conventions in
Section \ref{sec-prel}.  In Section \ref{sec-outline} we give the outline of 
the proof; since this is almost identical to that in \cite{NPV} except for the 
exponents, we only sketch it very briefly.  In Section \ref{sec-prop1} we focus on
the main harmonic-analytic inequality, Proposition \ref{dd-prop1},
and explain the main steps in its proof.  Of the two main estimates in the proof
of Proposition \ref{dd-prop1}, one (Lemma \ref{ww.lemma1}) is obtained by the 
same argument as in \cite{NPV}.  The second main estimate,
Lemma \ref{zz.lemma1}, is our main new contribution and 
relies on our tiling argument.  The latter is developed in Subsections \ref{tiling-sub1} 
and \ref{tiling-sub2}.  In Subsection \ref{tiling-sub3}, we combine it with the 
periodicity argument of \cite{NPV} to prove Lemma \ref{zz.lemma1}.

\bigskip\noindent
{\bf Acknowledgement.}  The authors are grateful to Alexander Volberg for helpful
discussions regarding the paper \cite{NPV}. Part of this research was done
while both authors were long-term visitors at the Fields Institute in Toronto.  
This work was supported by the Fields Institute and an NSERC Discovery Grant.


\section{Preliminaries}\label{sec-prel}
\init


Throughout this paper 
we will use $C,C',c$, etc.\ to denote various constants which may depend
on $K$ and the choice of $A$ and $B$, and may change from line to line, but always remain
independent of $N,n,m$.  We will sometimes write $P\gtrsim Q$ instead of $P\geq CQ$.
$P\approx Q$ will mean that $P\lesssim Q$ and $Q\lesssim P$.  If $G,H\subset\rr$ and
$\lambda\in\rr$, we write $\lambda G=\{\lambda g:\ g\in G\}$ and
$G+H=\{g+h:\ g\in G,\ h\in H\}$.

It suffices to consider $\theta\in[0,\pi/4]$.  We let $t=\tan \theta$, so that
$0\leq t\leq 1$.  
Let $A_n=K^{-n}A=\{K^{-n}a:\ a\in A\}\subset[0,K^{1-n}]$.  We also define
$$
A^n=A_1+A_2+\dots+A_n,
$$
$$
A^m_n=A_{n+1}+A_{n+2}+\dots+A_{m}=K^{-n}A^{m-n},\ m>n.
$$
Thus $|A^n|=|A|^n$, $|A^m_n|=|A|^{m-n}$, and $A^m=A^m_n+A^n$ for $m>n$.  The sets
$B_n$, $B^n$, $B^m_n$ are defined similarly.  Then
$$
E_n=(A^n\times B^n)+[0,K^{-n}]^2.
$$
Define
$$
\nu^n[t]=K^{-n}\sum_{a\in A^n,b\in B^n}\delta_{a+tb},
$$
$$
\nu^m_n[t]=K^{-m+n}\sum_{a\in A^m_n,b\in B^m_n}\delta_{a+tb},\ m>n,
$$
so that $\nu^m[t]=\nu^m_n[t]*\nu^n[t]$.  

We also need a counting function defined as follows.  Let $\phi(x)$ be the characteristic
function of the interval $[0,1]$, and let $\chi=\widehat{\phi}$.
We will also use the notation
$$\phi_n(x)=K^n\phi(K^n x),$$
then $\widehat{\phi_n}=\chi(K^{-n}\xi)=:\chi_n(\xi)$.  

Finally, we define our counting function
$$
F^n[t]=\nu^n[t]*\phi_n.
$$
This is almost identical (up to a mild rescaling of $\phi$) to the counting function used in
\cite{NPV}, which can be generalized to our setting as 
$
f^n[t]=\nu^n[t]*K^n{\bf 1}_{[0,(1+t)K^{-n}]},
$
so that $f^n[t](x)$ counts the number of squares that $\pi_\theta$ projects to
$x\cos\theta$.

For simplicity of notation, we will 
often suppress the dependence on $t$ and write
$F^m[t]=F^m$, $\nu^m[t]=\nu^m$, $\nu^m_n[t]=\nu^m_n$.  
We then have
$$
\widehat{F^m}=\widehat{\nu^m}\cdot\chi_m,\ $$
$$
\widehat{\nu^m}(\xi)=K^{-m}\sum_{a\in A^m,b\in B^m}e^{-2\pi i (a+tb)\xi}.
$$


\section{Outline of the proof}\label{sec-outline}
\init


Let $\theta_0, t_0, q,r$, be as in Theorem \ref{tiling-thm} and the comment immediately
following it.  Define
$$\sigma=(1+r+q)\gamma^{-1}.$$

\begin{proposition}\label{dd-prop1}
Let
$$
X_\lambda^N=\{t\in [0,1]: \ \max_{1\leq n\leq N}\int |F^n[t]|^2dx\leq\lambda\}.
$$
Then for all $\lambda\leq CK^{N/10}$ we have
\begin{equation}\label{dd-e20}
|X_\lambda^N|\leq C(\log\lambda)^{1/4\sigma}\lambda^{1+\sigma^{-1}} N^{-1/4\sigma}.
\end{equation}
\end{proposition}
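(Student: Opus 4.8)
The plan is to follow the template of \cite{NPV}: the set $X_\lambda^N$ consists of directions $t$ for which the counting functions $F^n[t]$ stay small in $L^2$ for all scales $n \le N$, and we want to show this set is small. The key mechanism is that, by self-similarity, if $t \in X_\lambda^N$ then $t$ must also lie in a comparable ``bad set'' at many intermediate scales, and each such constraint carves out only a thin slice of $[0,1]$. Concretely, I would first record the multiplicative structure $\nu^m[t] = \nu^m_n[t] * \nu^n[t]$ and the fact that $\int |F^n[t]|^2\,dx = \|\widehat{\nu^n}\chi_n\|_2^2$, so that control of these $L^2$ norms is equivalent to control of certain exponential sums $\widehat{\nu^n}(\xi)$ averaged against the fixed bump $\chi_n$.

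The heart of the argument is the harmonic-analytic dichotomy at a single scale: for each $n$, either $t$ forces $\int|F^n[t]|^2$ to be at least some fixed positive constant $c_0$ (``most'' $t$), or $t$ belongs to an exceptional set whose measure is controlled. This is exactly where Lemmas \ref{ww.lemma1} and \ref{zz.lemma1} enter --- the first handles the range where the relevant frequency is large and is taken verbatim from \cite{NPV}, while the second, proved via the tiling/generating-function analysis of Subsections \ref{tiling-sub1}--\ref{tiling-sub3}, handles the small-frequency range and replaces the explicit Riesz-product computation of \cite{NPV}. Granting Proposition-level input of the form ``$|\{t : \int|F^n[t]|^2 \le c_0\}| \lesssim (\text{small})$'' at each scale, one combines these over a suitably chosen lacunary sequence of scales $n_1 < n_2 < \dots < n_M$ inside $[1,N]$ with $M \approx N^{1/c}$, using that the events at well-separated scales are, after rescaling by the self-similarity, governed by essentially independent copies of the one-scale estimate. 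Multiplying the one-scale bounds (or more precisely iterating a conditional-measure estimate) over these $M$ scales produces a bound of the shape $|X_\lambda^N| \lesssim (\text{one-scale bound})^{M}$, and then optimizing the number of scales $M$ against $\lambda$ --- one can only afford $M \approx (\log\lambda)/(\text{something}) \cdot$ (a power determined by $\sigma$) many genuinely independent constraints before the bound saturates --- yields the exponent $N^{-1/4\sigma}$ together with the $\lambda^{1+\sigma^{-1}}$ and $(\log\lambda)^{1/4\sigma}$ factors. The restriction $\lambda \le CK^{N/10}$ is what guarantees enough usable scales for this optimization to go through.

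I expect the main obstacle to be the second one-scale estimate, Lemma \ref{zz.lemma1}, and in particular the passage from the tiling property of $\pi_{\theta_0}(E_\infty)$ --- which holds only in the limit --- to a usable quantitative statement about the finite iterations $\pi_{\theta_0}(E_n)$. The tiling of $\rr$ by $\pi_{\theta_0}(E_\infty)$ does not directly restrict to the $E_n$, so I would need to construct, by hand, an auxiliary tiling associated with each finite stage (this is the content of Subsections \ref{tiling-sub1}--\ref{tiling-sub2}), encode its combinatorial structure in a generating function in a variable like $z = e^{2\pi i \xi}$, and extract from the resulting polynomial identity a lower bound on $|\widehat{\nu^n}(\xi)|$ away from a small exceptional set of $t$. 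The quantitative loss here --- how many translates the finite-stage tiling requires, which scales with $q + r$ and $\gamma^{-1}$ --- is precisely what determines $\sigma = (1+r+q)\gamma^{-1}$ and hence the final exponent $p$; keeping this loss polynomial rather than exponential is the delicate point. The rest of the proof, namely the combinatorial bookkeeping of scales and the final optimization, is routine once the two one-scale inputs are in hand and follows \cite[Section 3]{NPV} with only the exponents changed.
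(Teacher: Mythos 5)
Your reading of which sub-lemmas matter is right: Lemma~\ref{ww.lemma1} is inherited from \cite{NPV}, Lemma~\ref{zz.lemma1} is the new tiling/generating-function input, and the exponent $\sigma=(1+r+q)\gamma^{-1}$ does indeed originate in the quantitative loss of the finite-stage tiling. But the mechanism you propose for assembling these into the bound (\ref{dd-e20}) --- a one-scale measure bound $|\{t:\int|F^n[t]|^2\le c_0\}|\lesssim(\text{small})$, iterated over a lacunary sequence of $M\approx N^{1/c}$ ``essentially independent'' scales to produce $|X_\lambda^N|\lesssim(\text{one-scale bound})^M$, then optimized in $M$ --- is not how the proof goes, and I do not see how it could be made to go. There is no usable one-scale measure bound of that form: for $\lambda$ in the relevant range the set $\{t:\int|F^n[t]|^2\le\lambda\}$ is all of $[0,1]$ for small $n$, so these constraints do not stack multiplicatively; and a genuine $M$-fold product with $M$ a power of $N$ would give stretched-exponential decay in $N$, which is not the shape of (\ref{dd-e20}).

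The actual argument is a pigeonhole followed by an upper/lower contradiction, and $N$ enters exactly once, through the pigeonhole. One fixes $m$ with $K^m\approx C\lambda$, and Plancherel together with $|\chi_N|\gtrsim 1$ at low frequencies turns the hypothesis $\int|F^N[t]|^2\le\lambda$ into $\int_1^{K^N}|\widehat{\nu^N}|^2\,d\xi\lesssim\lambda$. Since $[1,K^N]$ contains $\sim N/m$ disjoint windows $[K^n,K^{m+n}]$, Lemma~\ref{dd-lemma2} pigeonholes a single window and a subset $X'\subset X$ with $|X'|\ge|X|/2$ on which $I=\int_{K^n}^{K^{m+n}}|\widehat{\nu^N}|^2\,d\xi\lesssim\lambda m/N$. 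The two lemmas you identified are then used not to give one-scale measure bounds but to prove a \emph{lower} bound on the same $I$: using $\nu^{m+n}=\nu^{m+n}_n*\nu^n$ and the split into the set $Z_\delta$ where $|\widehat{\nu^{m+n}_n}|$ is small and its complement, with $\delta=c_0\lambda^{-\sigma}|X|^\sigma$, one obtains (Proposition~\ref{prop-i}) that $I\gtrsim\lambda^{-3-4\sigma}|X|^{4\sigma}$ for at least one $t\in X'$. Comparing the upper and lower bounds on $I$ forces $|X|\lesssim(m\lambda^{4+4\sigma}/N)^{1/4\sigma}$, and $m\approx\log\lambda$ gives (\ref{dd-e20}). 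The dependence on $|X|$ that makes the argument close comes entirely from the choice of $\delta$ in Lemma~\ref{zz.lemma1}, not from any iteration over scales.
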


Assuming Proposition \ref{dd-prop1}, 
we complete the proof of Theorem \ref{tiling-thm} as follows.  Since the argument here is
identical to that in \cite{NPV}, except that the exponents are modified to 
match our setting, we only give a very brief outline.  

Let $m(\lambda)=|\{x:\ G^N(x)\geq \lambda\}|$, where $G^N(x)=\max_{1\leq n\leq N}
F^N(x)$.  Following the argument in \cite{NPV}, one can prove that
$$
m(2a\lambda\lambda')\leq C\lambda m(\lambda)m(\lambda'),
$$
where $a$ denotes the maximal number of squares in $E_1$ that a straight line can intersect.
By induction,
$$
m(\lambda(2a\lambda)^j)\leq (C\lambda m(\lambda))^j m(\lambda).
$$
Let $T_N=\{t:\ m(\lambda)\leq c\lambda^{-3}\}$, with $c$ small enough.  As in
\cite{NPV}, we verify that for $t\in T_N$ we have
$$
\int (F^N[t])^2dx\leq 2\lambda.
$$
By Proposition \ref{dd-prop1}, we have 
\begin{equation}\label{dd-e71}
|T_N|\leq C \lambda^{1+\sigma^{-1}+\epsilon}
N^{-1/4\sigma}
\end{equation}
for any $\epsilon>0$, assuming that
\begin{equation}\label{dd-e70}
\lambda\leq CK^{N/10}.
\end{equation}
Here and in what follows, the various constants 
may depend on $\epsilon$ but we do not display that dependence. 

Following an argument of \cite{NPV} again, we see that for $t\notin T_N$ we have
$|\pi_\theta E_{RN}|\leq C\lambda^{-1}$ whenever $R\geq C'\lambda^2\log\lambda$ with
$C'$ large enough.  Let $s=C\lambda^{-1}$ and $M=RN$.  Then 
$$
\{\theta:\ |\pi_\theta E_{M}|\geq s\}\subset T_{M(C'\lambda^2\log\lambda)^{-1}},
$$
and the latter set has measure bounded by 
$$
C \lambda^{1+\sigma^{-1}+\epsilon}(M(C'\lambda^2\log\lambda)^{-1})^{-1/4\sigma}
=C(M\, s^{6+4\sigma +\epsilon'})^{-1/4\sigma}.
$$
Hence for $p=6+4\sigma+\epsilon'$ we have
\begin{align*}
\int_0^\pi |\pi_\theta (E_{M})|d\theta
&=\int_0^\infty |\{\theta:\ |\pi_\theta (E_{M})|\geq s\}|ds
\\
&\lesssim \int_0^{M^{-1/p}}1ds +\int_{M^{-1/p}}^\infty (Ms^p)^{-1/4\sigma}ds
\\
&\lesssim M^{-1/p}+M^{-1/4\sigma}M^{-\frac{1}{p}(1-\frac{p}{4\sigma})}
=2M^{-1/p}.\\
\end{align*}
We used here that if $s=C\lambda^{-1}\geq M^{-1/p}$, then $\lambda$ obeys 
(\ref{dd-e70}), hence (\ref{dd-e71}) applies.


\section{Proof of Proposition \ref{dd-prop1}}\label{sec-prop1}
\init


We now give a proof of Proposition \ref{dd-prop1} modulo several auxiliary results 
whose proofs are deferred
to later sections.  Fix $\lambda$ and $N$, and write $X_\lambda^N=X$.  
We also choose an integer $m$ so that 
\begin{equation}\label{dd-e21}
K^{m}\approx C\lambda
\end{equation}
with $C$ sufficiently large.  

Using Plancherel's theorem and that $|\chi_N|\geq c$ for $|\xi|\leq K^m$, we get that
$$
\int_1^{K^N} |\widehat{\nu^N}|^2d\xi
\leq c^{-1}\int_1^{K^N}|\widehat{F^N}|^2d\xi
\leq c^{-1}\lambda.
$$
The first lemma follows by easy pigeonholing.

\begin{lemma}\label{dd-lemma2}
For $1\leq m,n\leq N/10$, define
$$
I=I[t]=\int_{K^n}^{K^{m+n}} |\widehat{\nu^N}|^2\,d\xi.
$$
Then for each $1\leq m\leq N/10$ there is a $1\leq n\leq N/10$ and a
set $X'\subset X$ with $|X'|\geq \frac{1}{2}|X|$ such that for all
$t\in X'$ we have 
\begin{equation}\label{dd-e0}
I\leq \frac{C\lambda m}{N}.
\end{equation}
\end{lemma}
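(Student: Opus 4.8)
The plan is to feed the single frequency‑space bound $\int_1^{K^N}|\widehat{\nu^N}|^2\,d\xi\le C\lambda$, which holds for every $t\in X$ by what was just established, into a pigeonhole over a family of essentially disjoint multiplicative blocks. Fix $m$ with $1\le m\le N/10$, set $J=\lfloor N/(10m)\rfloor$, and for $j=1,\dots,J$ consider the blocks $[K^{jm},K^{(j+1)m}]$. Since $1\le m\le N/10$ we have $J\ge 1$ and in fact $J\gtrsim N/m$ (because $\lfloor x\rfloor\ge x/2$ for $x\ge 1$); moreover $(J+1)m\le N/5\le N$, so every block lies inside $[1,K^N]$, while consecutive blocks meet only at a single endpoint and are therefore disjoint up to a set of measure zero.

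First I would record that for each fixed $t\in X$,
$$
\sum_{j=1}^{J}\int_{K^{jm}}^{K^{(j+1)m}}|\widehat{\nu^N}[t]|^2\,d\xi\ \le\ \int_1^{K^N}|\widehat{\nu^N}[t]|^2\,d\xi\ \le\ C\lambda .
$$
Writing $I_{n}[t]=\int_{K^{n}}^{K^{m+n}}|\widehat{\nu^N}[t]|^2\,d\xi$ for the quantity in the statement (with $m$ fixed) and integrating the previous display over $t\in X$, Tonelli's theorem gives $\sum_{j=1}^{J}\int_X I_{jm}[t]\,dt\le C\lambda|X|$. Hence there is an index $j_0\in\{1,\dots,J\}$ with
$$
\int_X I_{j_0 m}[t]\,dt\ \le\ \frac{C\lambda|X|}{J}\ \le\ \frac{C'\lambda m}{N}\,|X|,
$$
where the last step uses $J\gtrsim N/m$. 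Chebyshev's inequality applied to the nonnegative function $I_{j_0 m}[\cdot]$ on $X$ then produces a subset $X'\subset X$ with $|X'|\ge\frac12|X|$ on which $I_{j_0 m}[t]\le 2C'\lambda m/N$. Taking $n:=j_0 m$, which satisfies $1\le n\le N/10$ since $j_0\le J$, and absorbing the constant, we obtain (\ref{dd-e0}).

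I do not anticipate a genuine obstacle here: this is a soft localization step, and the only points needing a moment's care are the arithmetic inequalities ensuring that the number of blocks $J$ is simultaneously $\gtrsim N/m$ and $\le N/(10m)$, and that the top block stays below frequency $K^N$ — all immediate from $m\le N/10$. The substance of the lemma is merely that the $L^2$ mass of $\widehat{\nu^N}$ can be pinned to a single multiplicative window $[K^n,K^{m+n}]$ at the cost of a factor $m/N$, which is precisely the form of hypothesis needed to launch the harder estimates (Lemmas \ref{ww.lemma1} and \ref{zz.lemma1}) in the subsequent sections.
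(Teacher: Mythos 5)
Your proof is correct and is essentially the ``easy pigeonholing'' the paper alludes to: decompose $[1,K^N]$ into $\gtrsim N/m$ disjoint multiplicative blocks of width $K^m$, average the total $L^2$ mass bound $\le C\lambda$ over $t\in X$, pick the block with smallest average mass, and apply Markov/Chebyshev to extract $X'$ with $|X'|\ge|X|/2$. The bookkeeping ensuring $1\le n=j_0m\le N/10$ and that all blocks lie in $[1,K^N]$ is handled correctly.
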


The key to the proof of Proposition \ref{dd-prop1} is the following estimate.

\begin{proposition}\label{prop-i}
There is a $t\in X'$ such that
$$
I\gtrsim \lambda^{-3-4\sigma}|X|^{4\sigma}.
$$

\end{proposition}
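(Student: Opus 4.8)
The plan is to bound $I=I[t]$ from below on average over $t\in X'$, and then extract a single good $t$. The quantity $I$ integrates $|\widehat{\nu^N}|^2$ over the dyadic-type range $[K^n,K^{m+n}]$. First I would factor $\nu^N=\nu^{m+n}_n * \nu^n * \nu^N_{m+n}$, so that $\widehat{\nu^N}=\widehat{\nu^n}\cdot\widehat{\nu^{m+n}_n}\cdot\widehat{\nu^N_{m+n}}$, and observe that on the frequency range in question the factor $\widehat{\nu^n}$ is essentially harmless: $\widehat{\nu^n}(\xi)$ is built from the low scales and one expects $|\widehat{\nu^n}(\xi)|\gtrsim 1$ (or at least can be controlled) for $|\xi|\lesssim K^n$ is the wrong range --- rather, the point is that on $[K^n,K^{m+n}]$ it is the \emph{middle} block $\widehat{\nu^{m+n}_n}=\widehat{\nu^m}(K^{-n}\,\cdot)$ that carries the oscillation at the relevant scale, while the tail $\widehat{\nu^N_{m+n}}$ is slowly varying there. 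So after the change of variables $\xi=K^n\eta$ the integral $I$ should be comparable to $K^n\int_1^{K^m}|\widehat{\nu^m}(\eta)|^2\,|\widehat{\nu^N_{m+n}}(K^n\eta)|^2\,d\eta$, and the task reduces to bounding $\int |\widehat{\nu^m}|^2$, equivalently $\int|F^m|^2$, from below.

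The mechanism for the lower bound on $\int|F^m|^2$ is the \textbf{non-flatness} of the counting function $F^m$ coming from the tiling structure: this is precisely where Lemma \ref{zz.lemma1} (the new tiling-based estimate advertised in the introduction) and Lemma \ref{ww.lemma1} enter. Concretely, $F^m[t]$ has mean value of order $1$ (it is $\nu^m[t]*\phi_m$, and $\nu^m$ is a probability measure spread over an interval of length $\approx(1+t)K^{-m}\cdot$something times $K^m$ points... here $\int F^m = 1$ up to constants), so by Cauchy--Schwarz $\int|F^m|^2$ is at least $1/(\text{length of support})$, which is $O(1)$ --- not good enough on its own. The gain must come from the fact that for $t\in X$ the function $F^m$ cannot be close to its "tiling" value everywhere: the definition of $X$ forces $\int|F^n|^2\le\lambda$ for all $n\le N$, and comparing the actual $F^m$ with the idealized tiling profile, the $L^2$ deficit is quantitatively bounded below in terms of $|X|$. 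Tracking constants through this comparison --- using $K^m\approx\lambda$ from \eqref{dd-e21} and the exponent $\sigma=(1+r+q)\gamma^{-1}$ --- should produce exactly $\int|F^m|^2\gtrsim \lambda^{-c}|X|^{4\sigma}$ for the right $c$, and then reinstating the factors $K^n$ and the tail $|\widehat{\nu^N_{m+n}}(K^n\eta)|^2\gtrsim\lambda^{-O(1)}$ on the relevant range converts this into the claimed $I\gtrsim\lambda^{-3-4\sigma}|X|^{4\sigma}$. Finally, since this is a \emph{lower} bound on an average (or on $\sup_{t\in X'}$), the existence of one $t\in X'$ with $I$ at least this large is immediate.

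The main obstacle is the middle step: establishing the quantitative $L^2$ lower bound $\int|F^m[t]|^2\,dx\gtrsim\lambda^{-O(1)}|X|^{4\sigma}$ uniformly (or on average) over $t\in X$. This is the crux of the whole paper and is exactly what the tiling analysis of Subsections \ref{tiling-sub1}--\ref{tiling-sub3} is designed for: one must encode the tiling of $\rr$ associated with $\pi_{\theta_0}(E_n)$ via generating functions, obtain from it a substitute for the explicit Riesz-product identity of \cite{NPV}, and then feed it into the periodicity argument of \cite{NPV} to control how far $F^m$ can deviate from flat. Everything else --- the frequency decomposition, the change of variables, the pigeonholing to pass from an average to a single $t$, and absorbing $K^n$-factors --- is routine bookkeeping once the constants in \eqref{dd-e21} are chosen appropriately.
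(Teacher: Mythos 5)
Your high-level plan (factor $\widehat{\nu^N}$, use the tiling structure, average over $t\in X'$ and pigeonhole) matches the paper's, but the mechanism you describe assigns the wrong roles to the two factors and to the two lemmas, and the step you flag as ``the crux'' is not what the paper proves.

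Concretely: the paper first discards the tail, using $|\widehat{\nu^N}|\approx|\widehat{\nu^{m+n}}|$ on $[K^n,K^{m+n}]$ (so there is no $\widehat{\nu^N_{m+n}}$ factor to worry about), and writes $\widehat{\nu^{m+n}}=\widehat{\nu_n^{m+n}}\cdot\widehat{\nu^n}$. The $L^2$ mass that produces the $K^m\approx\lambda$ in the lower bound comes from the \emph{low-scale} factor $\widehat{\nu^n}$: Lemma \ref{ww.lemma1} gives $\int_{K^n}^{K^{m+n}}|\widehat{\nu^n}|^2\gtrsim K^m$. You instead declare $\widehat{\nu^n}$ ``essentially harmless'' and try to get the mass from $\widehat{\nu^m}$, i.e.\ from $\int|F^m|^2$. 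That is backwards. The middle block $\widehat{\nu^{m+n}_n}$ is used only through a \emph{pointwise} lower bound $|\widehat{\nu_n^{m+n}}|\geq K^{-2m}\delta^2$ off a small exceptional set $Z_\delta$, not through an $L^2$ lower bound on $F^m$. And Lemma \ref{zz.lemma1} is an \emph{upper} bound: it shows that the part of $\int|\widehat{\nu^n}|^2$ living on $Z_\delta$ is at most $cK^m$, so that after subtraction enough of the Lemma \ref{ww.lemma1} mass survives on the complement of $Z_\delta$, where the pointwise bound applies. Multiplying $K^{-4m}\delta^4$ by the surviving $\gtrsim K^m$ and using $K^m\approx\lambda$, $\delta\approx\lambda^{-\sigma}|X|^{\sigma}$ gives $\lambda^{-3-4\sigma}|X|^{4\sigma}$.

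The quantitative claim your argument hinges on --- that the ``tiling deficit'' forces $\int|F^m[t]|^2\gtrsim\lambda^{-O(1)}|X|^{4\sigma}$ for $t\in X$ --- is not established anywhere and is not what the tiling analysis delivers. Lemma \ref{ss-lemma1} is a statement about the size and location of the approximate zero set of $A^m(e^{-2\pi iy})$, which feeds into the description of $Z_\delta$ (Lemma \ref{zz.lemma2}) and then into the upper bound $I_2\leq cK^m$; it does not give an $L^2$ lower bound on $F^m$. So the proposal has a genuine gap: the decomposition is misdirected, the lemmas are invoked for conclusions they do not provide, and the central inequality is asserted rather than derived.
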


The last inequality can only be compatible with (\ref{dd-e0}) if 
$\frac{\lambda m}{N}\gtrsim \lambda^{-3-4\sigma}
|X|^{4\sigma}$, 
i.e.
$$
|X|\leq Cm^{1/4\sigma}\lambda^{1+\sigma^{-1}} N^{-1/4\sigma}.
$$
Note that by (\ref{dd-e21}), $m\approx\log\lambda$.  Hence we have proved 
Proposition \ref{dd-prop1}.
\qed

It remains to prove Proposition \ref{prop-i}.  We write
\begin{align*}
I&= \int_{K^n}^{K^{m+n}} |\widehat{\nu^N}|^2\,d\xi\\
&\approx \int_{K^n}^{K^{m+n}} |\widehat{\nu^{m+n}}|^2\,d\xi\\
&=\int_{K^n}^{K^{m+n}} |\widehat{\nu^{m+n}_n}|^2\,|\widehat{\nu^n}|^2\,d\xi.
\end{align*}

The main two estimates are as follows.

\begin{lemma}\label{ww.lemma1}
Let
$$
I_1=\int_{K^n}^{K^{m+n}}|\widehat{\nu^n}|^2\,d\xi.
$$
Then for any $t\in X'$ we have
\begin{equation}\label{bb.e100}
I_1\gtrsim K^m.
\end{equation}
\end{lemma}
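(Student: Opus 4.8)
The plan is to bound $I_1 = \int_{K^n}^{K^{m+n}}|\widehat{\nu^n}|^2\,d\xi$ from below by comparing it, via a change of scale, to the quantity $\int_1^{K^N}|\widehat{\nu^N}|^2\,d\xi$, which was shown to be $\lesssim \lambda$ at the start of this section, and then running the argument in reverse: a lower bound on an integral of $|\widehat{\nu^n}|^2$ over a frequency window of multiplicative length $K^m$ should come for free from the fact that $\nu^n$ is a probability measure supported on a set of diameter $\approx 1$, so that $\widehat{\nu^n}(0)=1$ and $\widehat{\nu^n}$ stays bounded below on an interval of length $\approx 1$ near the origin. The self-similar structure $\nu^n = \nu^n_{n-1}*\cdots$ then lets one periodize: $\widehat{\nu^n}(\xi)$ is, up to the factor $\widehat{\nu^{n-1}}(\xi)$, a trigonometric polynomial in $K^{-(n-1)}\xi$ coming from $A_n\times B_n$, and this ``last scale'' factor forces $|\widehat{\nu^n}|$ to be comparable to $1$ on a union of many well-separated frequency intervals inside $[K^n, K^{m+n}]$. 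Summing the contributions of these intervals gives the bound $\gtrsim K^m$.

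Concretely, first I would record the exact self-similarity identity $\widehat{\nu^n}(\xi) = \prod_{j=1}^{n} g(K^{-j}\xi)$, where $g(\eta) = |A|^{-1}|B|^{-1}\sum_{a\in A, b\in B} e^{-2\pi i(a+tb)K^{-1}\eta}$ is the single-scale symbol (I am suppressing the $t$-dependence and the precise normalization, which is the routine bookkeeping). Note $g(0)=1$ and $g$ is smooth, so $|g|\geq 1/2$ on a fixed interval around $0$ of length depending only on $K,A,B$. This is the content of \cite{NPV} transplanted verbatim. The key point is that near any frequency $\xi$ with $K^n \le \xi \le K^{m+n}$, most of the factors $g(K^{-j}\xi)$ for small $j$ are evaluated at large arguments and are ``generically'' of size $\approx 1$ after averaging, while the few factors with $K^{-j}\xi = O(1)$ are controlled; an averaging/Plancherel computation over the full window then yields that $\int_{K^n}^{K^{m+n}}|\widehat{\nu^n}|^2 d\xi$ is bounded below by a constant times the length $\approx K^m$ of a single dyadic-type block, summed over the $\approx m$ such blocks — which is exactly where the gain $K^m$ (rather than just a constant) comes from. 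The role of $t\in X'$ is only to ensure the relevant frequency windows are the ones selected in Lemma \ref{dd-lemma2}; the lower bound here in fact holds for every $t$, since it uses only that $\nu^n[t]$ is a probability measure supported in a bounded set, which is why the statement says ``for any $t\in X'$''.

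The main obstacle I anticipate is making the ``generic size $\approx 1$'' claim for the product $\prod_j g(K^{-j}\xi)$ quantitative and uniform in $t\in[0,1]$: for special values of $t$ and $\xi$ many factors $g(K^{-j}\xi)$ could in principle be small simultaneously, so one cannot argue pointwise. The right move — and the one used in \cite{NPV} — is to integrate in $\xi$ over each block $[K^{n+k}, K^{n+k+1}]$ and use Plancherel on that scale: $\int_{K^{n+k}}^{K^{n+k+1}}|\widehat{\nu^n}|^2\,d\xi$ is, after rescaling $\xi \mapsto K^{n+k}\xi$, essentially $K^{n+k}$ times the $L^2$-mass of a rescaled measure over $[1,K]$, and since that measure has total mass $1$ and bounded support, its $L^2$-mass over the unit-length window $[1,K]$ is bounded below by a constant. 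Summing over $k=0,\dots,m-1$ gives $I_1 \gtrsim \sum_{k} K^{-(n+k)}\cdot K^{n+k}$? — no: one must be careful that the rescaling is by $K^{n}$ uniformly (so that $\nu^n$ rescaled has support of size $K^{n}\cdot K^{-n}$ bounded), giving $\int_{K^{n+k}}^{K^{n+k+1}}|\widehat{\nu^n}|^2 d\xi \approx K^{n}\int_{K^k}^{K^{k+1}}|\widehat{(\text{rescaled }\nu^n)}|^2$, and it is the \emph{number} $m$ of blocks together with a uniform lower bound on each that is delicate. In the end the clean statement is: $\widehat{\nu^n}$ restricted to $[K^n,K^{m+n}]$, when rescaled by $K^{-n}$, looks like $\widehat{\mu}$ for a probability measure $\mu$ of diameter $O(1)$ on the window $[1,K^m]$, and $\int_1^{K^m}|\widehat\mu|^2 \gtrsim K^m$ is false in general — so the correct reduction must instead exploit that $\nu^n$ is supported on a set of diameter exactly $\approx 1$ (not $\approx K^{-n}$) and hence $\widehat{\nu^n}$ genuinely oscillates at unit frequency, making its $L^2$ mass grow \emph{linearly} in the length of the frequency window. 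Pinning down this last point, with all constants independent of $t$, is the crux, and I would handle it exactly as in \cite{NPV}, citing their Lemma (our Lemma \ref{ww.lemma1} is stated to be ``obtained by the same argument as in \cite{NPV}'').
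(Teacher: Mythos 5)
Your proposal does not actually give a proof: after several tentative sketches you acknowledge that none of them quite closes, and you end by deferring to \cite{NPV}. Since the paper itself simply cites \cite[pp. 5--7]{NPV} for this lemma (the ``$P_2$ estimate''), that deferral is not by itself a problem, but the surrounding discussion contains errors that would mislead anyone trying to reconstruct the argument, so it is worth being precise about what goes wrong and what the actual mechanism is.

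First, two of your concrete sketches give the wrong order of magnitude. The claim that the ``last-scale factor'' forces $|\widehat{\nu^n}|$ to be comparable to $1$ on many well-separated intervals inside $[K^n,K^{m+n}]$, and that summing those contributions gives $K^m$, is not correct: there are $O(K^m)$ frequencies in that window at which $|\widehat{\nu^n}|$ can be near $1$, but each such spike has width only $O(K^{-n})$, so this mechanism yields at best $O(K^{m-n})$, far short of $K^m$. Similarly, splitting $[K^n,K^{m+n}]$ into the $m$ dyadic blocks $[K^{n+k},K^{n+k+1}]$ and arguing ``each block contributes $\gtrsim$ a constant'' gives at best $O(m)$, not $K^m$; you actually notice that the arithmetic does not come out, but never resolve it. The correct intuition, which you do state in passing, is that $|\widehat{\nu^n}|^2$ has mean value $\approx K^{-n}$ over a long frequency window, so the window of length $\approx K^{m+n}$ contributes $\approx K^m$. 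But ``mean value $\approx K^{-n}$'' is exactly the statement one has to make rigorous, and you do not.

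Second, the missing key idea is a positivity (Fej\'er-type) argument. Since $|\widehat{\nu^n}|^2 = \widehat{\mu^n}$ with $\mu^n=\nu^n*\tilde\nu^n$ a nonnegative measure that has an atom of mass at least $K^{-n}$ at the origin (the diagonal contribution), one tests against a kernel $\Phi(\xi/K^{m+n})$ with $\Phi\ge 0$ supported in $[-1,1]$ and $\widehat{\Phi}\ge 0$ (e.g.\ the Fej\'er kernel): then
$\int |\widehat{\nu^n}(\xi)|^2\Phi(\xi/K^{m+n})\,d\xi = K^{m+n}\int \widehat{\Phi}(K^{m+n}x)\,d\mu^n(x)\ge K^{m+n}\cdot K^{-n}\widehat{\Phi}(0)\gtrsim K^m$,
with a constant depending only on $K$. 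This is the rigorous content of your ``linear growth of $L^2$ mass'', and it is nowhere in your sketch.

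Third, and most substantively, you assert that $t\in X'$ plays no role and that the bound holds for every $t$. This is not supported by your argument and is, as far as I can tell, not how the proof works. The positivity argument gives a lower bound on $\int_0^{K^{m+n}}|\widehat{\nu^n}|^2$, not on $\int_{K^n}^{K^{m+n}}$. To pass to the latter one must subtract the low-frequency piece $\int_0^{K^n}|\widehat{\nu^n}|^2$, and the only available upper bound of the right size comes from $t\in X$: since $|\chi_n|\gtrsim 1$ on $|\xi|\lesssim K^n$, the hypothesis $\int|\widehat{F^n}|^2 = \int|\widehat{\nu^n}|^2|\chi_n|^2\le\lambda$ gives $\int_0^{cK^n}|\widehat{\nu^n}|^2\lesssim\lambda$. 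It is precisely the choice of $m$ in (\ref{dd-e21}), with $K^m\approx C\lambda$ and $C$ large, that makes this low-frequency contribution negligible compared with $K^m$, and this is the step where $t\in X'$ and the exact normalization of $m$ enter. Without this the subtraction cannot be carried out (unconditionally, $\int_0^{K^n}|\widehat{\nu^n}|^2$ could in principle be as large as $K^n$, which may dominate $K^m$ when $n>m$), so the claim that the restriction to $X'$ is cosmetic should be withdrawn.
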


\begin{lemma}\label{zz.lemma1}
Let 
$$
I_2=\frac{1}{|X'|}\int_{X'}
\int_{[K^n,K^{m+n}]\cap Z_\delta}
|\widehat{\nu^n}|^2d\xi\,dt,
$$
where $Z_\delta=\{\xi:\ |\widehat{\nu_n^{m+n}}|\leq K^{-2m}\delta^2\}$. 
Let $c>0$ be a small constant, and let
\begin{equation}\label{zz.e66}
\delta=c_0 \lambda^{-\sigma}|X|^{\sigma},
\end{equation}
where $\sigma$ is defined right after Theorem \ref{tiling-thm}.  Then
\begin{equation}\label{zz.e0}
I_2\leq cK^{m}.
\end{equation}
\end{lemma}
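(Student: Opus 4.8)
The plan is to exploit the tiling structure of $\pi_{\theta_0}(E_n)$ to show that $\widehat{\nu_n^{m+n}}$ cannot be small on too large a set, and then average over $t \in X'$. First I would fix the rational slope $t_0 = q/r$ and record the basic self-similarity: since $\nu_n^{m+n}[t] = K^{-m+n}\sum_{a \in A_n^{m+n}, b\in B_n^{m+n}} \delta_{a+tb}$ is, up to the rescaling by $K^{-n}$, a copy of $\nu^m[t]$, it suffices to control $\widehat{\nu^m[t]}$ on the rescaled frequency window $[1, K^m]$. The key algebraic input from Subsections \ref{tiling-sub1}--\ref{tiling-sub2} is that at $t = t_0$ the mass distribution $\nu^m[t_0]$, after clearing denominators (multiplying positions by $r$), is supported on integers and the associated generating polynomial factors in a way that reflects a genuine tiling of a finite cyclic group or of $\zz$. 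Concretely, I expect one obtains a lower bound of the form $|\widehat{\nu^m[t_0]}(\xi)| \geq c$ for all $\xi$ in an arithmetic progression of step comparable to $r+q+1$, with controlled gaps in between — this is the ``good substitute for the trigonometric identities of \cite{NPV}'' promised in the introduction.

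Next I would transfer this from the single slope $t_0$ to a neighbourhood. Writing $t = t_0 + h$, the exponential sum $\widehat{\nu^m[t]}(\xi) = K^{-m}\sum e^{-2\pi i(a+tb)\xi}$ differs from $\widehat{\nu^m[t_0]}(\xi)$ by a phase $e^{-2\pi i h b \xi}$ inside the sum; since $b \in B^m \subset [0, K^{1-m}\cdot\text{const}]$ — more precisely $b = O(1)$ after the relevant scaling, while in the window we care about $\xi = O(K^m)$ — the perturbation is harmless as long as $|h| \lesssim K^{-m} = O(\lambda^{-1})$. So the set of $t$ for which $\widehat{\nu_n^{m+n}[t]}$ can be as small as $K^{-2m}\delta^2$ on a substantial portion of $[K^n, K^{m+n}]$ is confined to a union of short intervals around the ``bad'' rationals; a counting/periodicity argument (the one from \cite{NPV} invoked in Subsection \ref{tiling-sub3}) bounds the total length of these intervals by something like $C\delta^{2}$ times a power of $\lambda$, and crucially by a quantity that becomes small once $\delta$ is chosen as in \eqref{zz.e66}. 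Combining: on $Z_\delta \cap [K^n, K^{m+n}]$ one has pointwise $|\widehat{\nu_n^{m+n}}| \leq K^{-2m}\delta^2$, while $\int_{K^n}^{K^{m+n}} |\widehat{\nu^n}|^2 \, d\xi \lesssim I_1 \lesssim K^m$ (using that $|\widehat{\nu^n}| \leq 1$ crudely on the bad set, or the bound from Lemma \ref{ww.lemma1} applied to a dyadic decomposition), so the $t$-average of the restricted integral is at most $K^m \cdot (\text{measure of bad } t) / |X'| + (\text{contribution where } |\widehat{\nu^n}|^2 \text{ is genuinely small})$, and the choice of $\delta$ makes each term $\leq \tfrac{c}{2} K^m$.

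The main obstacle, and the place where essentially all the new content of the paper sits, is the first step: producing the quantitative lower bound on $|\widehat{\nu^m[t_0]}(\xi)|$ along an arithmetic progression from the tiling condition. The tiling only gives information about $\pi_{\theta_0}(E_\infty)$, not directly about the finite iterate $\pi_{\theta_0}(E_n)$, so one must first manufacture an auxiliary tiling adapted to the $n$-th stage — inflating or truncating the infinite tiling set appropriately — and then translate ``$S$ tiles $\zz$ (or $\zz_M$)'' into a statement that the generating function $\sum_{s\in S} z^s$ divides $(1 + z + \dots + z^{M-1})$, hence has no zeros at the relevant roots of unity, hence $|\widehat{\nu^m[t_0]}|$ stays away from $0$ there. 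Keeping the constants uniform in $m$ (so that the final $\delta$ in \eqref{zz.e66} has the stated exponent $\sigma = (1+r+q)\gamma^{-1}$) requires tracking how the tiling period grows with $m$, which is where the factor $1+q+r$ and the dimension $\gamma$ enter; getting these exponents right is delicate but, granted Subsections \ref{tiling-sub1}--\ref{tiling-sub2}, mechanical. The remaining perturbation and averaging steps are routine given \cite{NPV}.
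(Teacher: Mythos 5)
Your proposal contains a genuine gap, and it stems from a misreading of what the tiling structure is used for.

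Your central strategy is: prove a pointwise lower bound $|\widehat{\nu^m[t_0]}(\xi)|\gtrsim 1$ along some arithmetic progression in $\xi$ at the \emph{single} rational slope $t_0$, and then transfer this bound to nearby slopes $t=t_0+h$ by a crude phase perturbation, arguing that this is harmless for $|h|\lesssim K^{-m}\approx\lambda^{-1}$. The problem is that $I_2$ is an average over \emph{all} of $X'$, and nothing confines $X'$ (or $X$) to a $\lambda^{-1}$-neighbourhood of $t_0$. A priori $|X|$ can be vastly larger than $\lambda^{-1}$ — indeed the entire point of Proposition \ref{dd-prop1} is to prove a nontrivial upper bound on $|X|$, and that bound, $|X|\lesssim(\log\lambda)^{1/4\sigma}\lambda^{1+\sigma^{-1}}N^{-1/4\sigma}$, is still much larger than $\lambda^{-1}$ for most values of $N$ and $\lambda$ in the allowed range. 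So the perturbation step covers only a negligible portion of the $t$-integration domain, and you have no mechanism for handling the rest of $X'$; the vague mention of ``short intervals around the bad rationals'' does not supply one, since the tiling hypothesis is about the single slope $t_0$.

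The paper's actual mechanism is different and cleverer. Because $\nu^m[t]$ is a \emph{product} measure in the sense that $\widehat{\nu_n^{m+n}}(K^n y)=K^{-m}A^m(e^{-2\pi iy})\,B^m(e^{-2\pi ity})$, the constraint $|\widehat{\nu_n^{m+n}}|\leq K^{-2m}\delta^2$ forces one of the two polynomial factors to be small, and each factor depends on $t$ only through the single variable $y$ (respectively $ty$). The tiling at $t_0$ is used once and for all to extract a \emph{structural} identity for the polynomials $A^m(x)$ and $B^m(x)$ themselves (equations \eqref{ss-e5}--\eqref{ss-e7}): the factorization into $\Lambda,A^m,B^m,S^m$ times a low-degree polynomial $Q$. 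Lemma \ref{ss-lemma1} then says that $A^m(e^{-2\pi iy})$ (weighted by $|B|^m$) can be $\leq\delta$ only when $y$ lies in a thin arithmetic-progression-neighbourhood $I_\delta$ or in a bounded number $r+q$ of short intervals $Y$ coming from the roots of $Q$ — and this description is entirely independent of $t$. The lemma is then applied to both factors, yielding Lemma \ref{zz.lemma2}, and the bound on $I_2$ follows by integrating $|\widehat{\nu^n}|^2$ over $I_\delta\cup Y\cup t^{-1}I_\delta\cup t^{-1}Y$ using periodicity of $A^n(e^{-2\pi iK^n y})$ and $B^n$, together with the change of variables $u=ty$. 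No perturbation around $t_0$ is ever performed. Your proposal, as written, cannot be repaired without essentially abandoning the perturbation idea and adopting the paper's ``factor and constrain each factor separately'' scheme.
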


Assuming Lemmas \ref{ww.lemma1} and \ref{zz.lemma1}, we write
\begin{align*}
\frac{1}{|X'|}\int_{X'}I(t)dt
&=\frac{1}{|X'|}\int_{X'}
\int_{K^n}^{K^{m+n}}|\widehat{\nu_n^{m+n}}|^2
|\widehat{\nu^n}|^2d\xi\,dt
\\
&\geq K^{-4m}\delta^4 \frac{1}{|X'|}\int_{X'}
\int_{[K^n,K^{m+n}]\setminus Z_\delta}|\widehat{\nu^n}|^2d\xi\,dt
\\
&= K^{-4m}\delta^4 \Big(\frac{1}{|X'|}\int_{X'}I_1(t)dt
-I_2\Big).
\\
\end{align*}
By (\ref{bb.e100}) and (\ref{zz.e0}), $\int_{X'}I(t)dt\geq c_0K^{-3m}\delta^4
|X'|$.  Thus there is at least one $t\in X'$ such that
$$
I(t)\geq c_0 K^{-3m}\delta^4\approx \lambda^{-3}\lambda^{-4\sigma}|X|^{4\sigma},
$$
which ends the proof of the proposition.
\qed

Lemma \ref{ww.lemma1} is proved exactly as in \cite[pp. 5-7]{NPV} (the ``$P_2$ estimate").
Lemma \ref{zz.lemma1} is our main new contribution.  We will now proceed to its proof.


\section{The tiling argument}\label{sec-tiling}
\init


\subsection{A review of basic definitions}\label{tiling-sub1}

Our proof of Lemma \ref{zz.lemma1} will rely on various results on tilings
of the real line and of the integers.  We first recall the main definitions.
Let $\Omega$ be a subset of $\rr$ of positive Lebesgue measure.  We will say that 
$\Omega$ {\em tiles $\rr$ by translations} if there is a set ${\cal T}$, henceforth
called the {\em translation set}, such that the sets $\Omega+\tau$, $\tau\in{\cal T}$,
are disjoint up to sets of measure zero and
$$
\bigcup_{\tau\in{\cal T}} (\Omega+\tau)=\rr.
$$
We will sometimes write $\Omega\oplus {\cal T}=\rr$.  Lagarias and Wang \cite{LW1}
prove that any tiling of $\rr$ by a set $\Omega$ of bounded diameter is periodic:
there is a $T\neq 0$ (the period of the tiling) such that $T+{\cal T}={\cal T}$.
They also obtain a description of the structure of all such tilings
which we will have cause to invoke in the sequel. 

We will also need a few basic definitions and facts from the theory of
integer tilings and factorization of cyclic
groups.  A set of integers $A$ is said to {\em tile the integers}
if there is a set $C\subset\zz$ such that every integer $n$ can be written
in a unique way as $n=a+c$ with $a\in A$ and $c\in C$.  It is well known (see
\cite{New}) that any tiling of $\zz$ by a finite set $A$ must be periodic:
$C=B+M\zz$ for some finite set $B\subset \zz$ such that $|A|\,|B|=M$.  We
then write 
\begin{equation}\label{dd-e36}
A\oplus B=\zz_M
\end{equation}  
and say that $A$ and $B$ are {\em complementing sets} modulo $M$, or 
that $B$ is the {\em complement} of $A$. 
Without loss of generality, we may assume that $0\in A\cup B$. 
It is easy to verify that (\ref{dd-e36}) is then equivalent
to
\begin{equation}\label{e-tiling}
 A(x)B(x)=1+x+\dots+x^{M-1}\ \mod (x^M-1),
\end{equation}  
where we use $A(x)$ to denote the generating function of a set $A\subset\rr$:
$$
A(x)=\sum_{a\in A} x^a.
$$

The subject of integer tilings and factorization of abelian groups has been researched
quite extensively, see e.g. \cite{CM}, \cite{LW1}, \cite{LW2}, \cite{New}, \cite{S1},
\cite{Sz}, \cite{Tij}.  In this paper, we will rely on (\ref{e-tiling}) to provide
a key estimate in the next subsection.


\subsection{The approximate zero set estimate}\label{tiling-sub2}

Throughout this subsection we fix a value of $\theta_0$ so that 
$\pi_{\theta_0}(E_\infty)$ has positive 
1-dimensional measure.  By symmetry, we may assume that $0<\theta_0<\pi/2$. 
Let $t_0=\tan\theta_0$
and $T_\infty=\pi_{\theta_0}(E_\infty)$.

It was proved by Kenyon \cite{Ke} for a special case of a self-similar set, and Lagarias 
and Wang \cite{LW1} in
full generality, that $t_0$ must be rational and that $T_\infty$ tiles periodically the
real axis. For our purposes, we need additional information on the structure of the
tiling which can be extracted from \cite{Ke}, \cite{LW1} by analyzing their arguments
more carefully.  

Write $t_0=q/r$, $q,r\in\nn$. Note also that by self-similarity we have for any $n$
\begin{equation}\label{ss-e1}
T_\infty=\bigcup_{x\in A^n+tB^n}(x+K^{-n}T_\infty).
\end{equation}
An argument of \cite[Lemmas 3,4]{Ke} shows that
\begin{itemize}
	\item $T_\infty$ contains a line segment $J$, 
	\item the sets $x+K^{-n}T_\infty$ in (\ref{ss-e1}) are mutually disjoint,
	\item in particular, $J$ is covered by disjoint translates of the set $K^{-n}T_\infty$,
	\item this covering can be extended to a periodic tiling of $\rr$ by
	translates of $K^{-n}T_\infty$:
$$\bigcup_{x\in {\cal T}}(x+ K^{-n}T_\infty)=\rr,$$
where the sets $\{x+K^{-n}T_\infty\}_{x\in{\cal T}}$
are mutually disjoint.  
\end{itemize}

The set ${\cal T}$ is usually referred to as the {\em translation
set}. Here and below, by ``disjoint" we will mean ``disjoint up to a set of measure 0".

If $n$ is chosen large enough relative to the length of $J$, $J$ contains an entire period of
the tiling.  However, all $x\in {\cal T}$ such that $x+ K^{-n}T_\infty\subset J$
are of the form $a+tb$, $a\in A^n,b\in B^n$, so that $x\in r^{-1}K^{-n}\zz$.
It follows that ${\cal T}\subset  r^{-1}K^{-n}\zz$ and in particular the period
of the tiling is a number of the form $r^{-1}K^{-n}M$ with $M\in\zz$.

For convenience, we rescale everything by $rK^nM^{-1}$.  The rescaled tiling is 
1-periodic and has the form 
$$\bigcup_{x\in \Lambda+\zz}(x+ rM^{-1}T_\infty)=\rr,$$
where the sets $x+ rM^{-1}T_\infty$ are mutually disjoint and $\Lambda\subset
[0,1)\cap M^{-1}\zz$.  Using self-similarity as in (\ref{ss-e1}) and invoking
Kenyon's disjointness argument again, we rewrite this as
$$
\bigcup_{x\in \Lambda+\zz}(x+rM^{-1}A^m+qM^{-1}B^m+rM^{-1}K^{-m}T_\infty)=\rr
$$
or, equivalently, 
\begin{equation}\label{ss-e2}
\bigcup_{x\in \Lambda+rM^{-1}A^m+qM^{-1}B^m+\zz} (x+rM^{-1}K^{-m}T_\infty)=\rr,
\end{equation}
where the union is disjoint.
We consider (\ref{ss-e2}) as a tiling where $rM^{-1}K^{-m}T_\infty$ 
is the tile and $\Lambda+rM^{-1}A^m+qM^{-1}B^m+\zz$ is the translation set.  Note
that $\Lambda+rM^{-1}A^m+qM^{-1}B^m\subset K^{-m}M^{-1}\zz$.  

By \cite{LW1}, Theorem 3 and the argument in its proof on page 359, 
$MK^m(\Lambda+rM^{-1}A^m+qM^{-1}B^m)$ is a complementing set modulo $MK^m$.  Moreover,
it has a complement of the form $MK^{m}S^m$, where
$$
S^m=\{y\in K^{-m}M^{-1}\zz:\ y+w\in rM^{-1}K^{-m}T_\infty\}
$$
for some fixed $w\in(0,M^{-1}K^{-m})$.  In our case, the set $T_\infty$ has
diameter at most $1+t_0$.  Translating $S^m$ if necessary, we may assume that
\begin{equation}\label{ss-e3}
MK^m S^m\subset\{0, 1,\dots,r+q-1\}.
\end{equation}

The above statement on complementing sets can be reformulated in terms of generating 
functions:
\begin{equation}\label{ss-e4}
\begin{split}
&\Lambda(x^{MK^m})\cdot (rM^{-1}A^m)(x^{MK^m})\cdot (qM^{-1}B^m)(x^{MK^m})
\cdot S^m(x^{MK^m})
\\
&=\big(1+x+x^2+\dots+x^{MK^m-1}\big)\big(1+(x-1)P(x)\big),
\end{split}
\end{equation}
where, by (\ref{ss-e3}), $P(x)$ is a polynomial of degree at most $r+q-1$.
Changing variables and summing the geometric progression
on the right side, we rewrite (\ref{ss-e4}) as
\begin{equation}\label{ss-e5}
\begin{split}
&\Lambda(x)\cdot (rM^{-1}A^m)(x)\cdot (qM^{-1}B^m)(x)
\cdot S^m(x)
\\
&=\frac{x-1}{x^{1/MK^m}-1}
\big(1+(x^{1/MK^m}-1)P(x^{1/MK^m})\big),
\end{split}
\end{equation}

Furthermore, since $MK^m(\Lambda+rM^{-1}A^m+qM^{-1}B^m)$ and $MK^{m}S^m$ are 
complementing sets modulo $MK^m$, it follows that
$$
|\Lambda|\cdot|A^m|\cdot |B^m|\cdot|S^m|=MK^m,
$$
so that 
\begin{equation}\label{ss-e6}
|\Lambda|\cdot |S^m|=M,
\end{equation}
independently of $m$.

The main result of this subsection is the following lemma.

\begin{lemma}\label{ss-lemma1}
Suppose that $y\in [0,K^m]$ and
\begin{equation}\label{ss-e20}
|A^m(e^{-2\pi iy})||B|^m\leq\delta,
\end{equation}
and fix $\epsilon\in(0,1)$.  Then $y\in Y\cup (I_\delta\cap [0,K^m])$, where
$$I_\delta =rM^{-1}\zz+(-c\delta^{1-\epsilon},c\delta^{1-\epsilon}),$$
and $Y$ is the union of at most $r+q$ intervals of length at most 
$CK^m\delta^{\epsilon(r+q)^{-1}}$.  (All constants here depend on $A,B,M,q,r$.)
\end{lemma}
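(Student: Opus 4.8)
The plan is to extract the bound on $|A^m(e^{-2\pi iy})|$ from the factorization identity \eqref{ss-e5}. Rescale so that $x = e^{-2\pi i y / MK^m}$; then $x^{MK^m} = e^{-2\pi i y}$, and the left side of \eqref{ss-e5} (after undoing the change of variables, i.e. working with \eqref{ss-e4}) is a product of four generating functions evaluated at $e^{-2\pi i y}$, one of which is $(rM^{-1}A^m)(e^{-2\pi i y})$, whose modulus is exactly $|A^m(e^{-2\pi i y})|$. The hypothesis \eqref{ss-e20} says this factor is at most $\delta |B|^{-m}$; I expect the other three factors $\Lambda(\cdot)$, $(qM^{-1}B^m)(\cdot)$, $S^m(\cdot)$ to be controllable in size (each is a sum of at most $|\Lambda|$, $|B|^m$, $|S^m|$ unimodular terms respectively, and by \eqref{ss-e6} $|\Lambda|\,|S^m| = M$ is bounded). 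So the modulus of the \emph{whole} left side of \eqref{ss-e4} is at most $C\delta$ unless the extra factor $|B|^{-m}$ is compensated — but in fact the right side of \eqref{ss-e4}, namely $\big|\tfrac{x^{MK^m}-1}{x-1}\big|\cdot\big|1 + (x-1)P(x)\big|$ with $x = e^{-2\pi i y/MK^m}$, must then also be small. This forces a dichotomy: either $\big|\tfrac{e^{-2\pi i y}-1}{e^{-2\pi i y/MK^m}-1}\big|$ is small — which happens precisely when $y$ is close to $rM^{-1}\zz$ after unwinding the scaling, since $e^{-2\pi i y}=1$ exactly on $M^{-1}\zz$ and the denominator never vanishes on the relevant range — or the factor $|1 + (x-1)P(x)|$ is small, where $P$ has degree at most $r+q-1$ by \eqref{ss-e3}.

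First I would make the size bounds on the three ``harmless'' factors precise, obtaining $|A^m(e^{-2\pi iy})| \cdot (\text{product of the other three}) \le \delta \cdot (\text{const})$, and then divide through: the right-hand side of \eqref{ss-e4} has modulus at most $C\delta$. Next I would analyze the two factors on the right separately. The Dirichlet-kernel-type factor $\big|\sin(\pi y)/\sin(\pi y / MK^m)\big|$ is bounded below by $c K^m$ when $y$ is at distance $\ge c$ from $M^{-1}\zz$ and $|y| \le K^m$ (standard estimate), and more generally is $\gtrsim (\text{dist}(y, M^{-1}\zz))^{-1}$-type behavior; so if $y$ is \emph{not} within $c\delta^{1-\epsilon}$ of $rM^{-1}\zz$ — wait, I need to track the $r$ versus $M$ carefully here, since the lattice of zeros after the final rescaling by $rK^n M^{-1}$ is $r M^{-1}\zz$, not $M^{-1}\zz$; this is exactly why $I_\delta = rM^{-1}\zz + (-c\delta^{1-\epsilon}, c\delta^{1-\epsilon})$ appears. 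So: if $y \notin I_\delta$, the Dirichlet factor is $\gtrsim \delta^{-(1-\epsilon)}$, hence to keep the product $\le C\delta$ we need $|1 + (x-1)P(x)| \lesssim \delta^{2-\epsilon} \le \delta^\epsilon$ (crudely), i.e. $|1 + (e^{-2\pi i y/MK^m}-1)P(e^{-2\pi i y/MK^m})| \lesssim \delta^\epsilon$.

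The final step: the polynomial $Q(z) := 1 + (z-1)P(z)$ has degree at most $r+q$, satisfies $Q(1) = 1 \ne 0$, and is the generating-function incarnation of a genuine complement, so its coefficients are bounded by constants depending only on $A,B,M,q,r$. A polynomial of degree $\le r+q$ has at most $r+q$ roots, and near each root $z_0$ on the unit circle, $|Q(z)| \le \delta^\epsilon$ confines $z$ to an arc of length $\lesssim \delta^{\epsilon/(r+q)}$ (since $Q$ vanishes to order at most $r+q$ at any point, $|Q(z)| \gtrsim |z - z_0|^{r+q}$ locally, with constants controlled by the bounded coefficients). Pulling this back through $z = e^{-2\pi i y / MK^m}$ multiplies lengths by $MK^m$, giving at most $r+q$ intervals in $y$ of length $\lesssim K^m \delta^{\epsilon/(r+q)}$; this is the set $Y$. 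I expect the main obstacle to be the bookkeeping around the rescaling factor $rK^n M^{-1}$ — making sure the zero lattice is $rM^{-1}\zz$ and that the Dirichlet-kernel lower bound is applied on the correct range $y \in [0, K^m]$ with the correct constants — together with justifying the uniform coefficient bound on $Q$ so that the local lower bound $|Q(z)| \gtrsim |z-z_0|^{r+q}$ has constants independent of $m$. The quantitative root-separation / ``polynomial small on an arc implies arc is short'' estimate is routine once the degree and coefficient bounds are in hand.
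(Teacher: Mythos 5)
Your overall strategy is the same as the paper's: use the factorization \eqref{ss-e4}/\eqref{ss-e5}, bound the three ``harmless'' generating-function factors by $|\Lambda|$, $|B|^m$, $|S^m|$ (with $|\Lambda|\,|S^m|=M$), conclude that the Dirichlet-kernel factor times $Q$ is small, and split into the two cases $|1-x|$ small (giving $I_\delta$) and $|Q|$ small (giving $Y$). But there is a real error in how you set up the parametrization, and it does not get fixed by the ``track $r$ versus $M$'' remark.

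You take $x=e^{-2\pi i y/MK^m}$ so that $x^{MK^m}=e^{-2\pi i y}$, and then claim that the factor $(rM^{-1}A^m)(x^{MK^m})=(rM^{-1}A^m)(e^{-2\pi iy})$ has modulus $|A^m(e^{-2\pi iy})|$. That is false: by definition $(rM^{-1}A^m)(z)=\sum_{a\in A^m}z^{rM^{-1}a}$, so $(rM^{-1}A^m)(e^{-2\pi iy})=A^m(e^{-2\pi i ry/M})$, which is \emph{not} the quantity that the hypothesis \eqref{ss-e20} controls. Consequently your argument never actually uses \eqref{ss-e20}. The correct substitution (the one the paper makes, in the variable of \eqref{ss-e5}) is $x=e^{-2\pi iMy/r}$, chosen precisely so that $x^{rM^{-1}}=e^{-2\pi iy}$ and hence $A^m(x^{rM^{-1}})=A^m(e^{-2\pi iy})$; equivalently, in the variable of \eqref{ss-e4} one should take $x=e^{-2\pi iy/(rK^m)}$. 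With the correct substitution the numerator $x^{MK^m}-1=e^{-2\pi iMy/r}-1$ vanishes on $y\in rM^{-1}\zz$, which is the lattice in the lemma; with yours it vanishes on $y\in\zz$, which is why your $r$/$M$ bookkeeping never closes.

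A second, smaller problem: the claimed lower bound ``$|\sin(\pi y)/\sin(\pi y/MK^m)|\geq cK^m$ whenever $y\in[0,K^m]$ is at distance $\geq c$ from the lattice'' is not true; for $y$ comparable to $K^m$ the denominator is bounded below by a constant and the kernel is $O(1)$. The paper sidesteps this entirely by multiplying both sides by $1-x^{1/MK^m}$ (passing to \eqref{ss-e7}), which cancels the denominator; then the right-hand side is $(1-x)Q(x^{1/MK^m})$, the trivial bound $|1-x^{1/MK^m}|\leq 2$ is used on the left, and the dichotomy $|1-x|\leq\Delta^{1-\epsilon}$ or $|Q(x^{1/MK^m})|\leq\Delta^\epsilon$ is immediate. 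The step you should have taken, if you insist on keeping the full kernel, is simply to bound the denominator above by $1$, giving the kernel $\geq\tfrac12|x^{MK^m}-1|$ and the same dichotomy. Your remaining steps --- $Q$ has degree $\leq r+q$, each root multiplicity $\leq r+q$, so $|Q|\leq\Delta^\epsilon$ confines $y/(rK^m)$ to at most $r+q$ arcs of length $\lesssim\Delta^{\epsilon/(r+q)}$ --- match the paper. Your worry about the $m$-uniformity of the implied constant in $|Q(z)|\gtrsim|z-z_j|^{r+q}$ is a fair observation (the paper treats it implicitly), but it is not the gap here; fix the substitution first.
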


\begin{proof}
Let $x=e^{-2\pi iMy/r}$ and $\Delta=2M\delta$.  
We need to find the set where 
\begin{equation}\label{ss-e10}
|A^m(x^{rM^{-1}})|\leq \frac{\Delta}{2} M^{-1}|B|^{-m},
\end{equation}

Multiplying both sides of (\ref{ss-e5}) by $1-x^{1/MK^m}$, we get
\begin{equation}\label{ss-e7}
(1-x^{1/MK^m})\Lambda(x)\cdot A^m(x^{rM^{-1}})\cdot B^m(x^{qM^{-1}})
\cdot S^m(x)
=(1-x)Q(x^{1/MK^m}),
\end{equation}
where $Q(u)=1+(u-1)P(u)$ is a polynomial of degree at most $r+q$.
The left hand side of (\ref{ss-e7}) is bounded in absolute value by
$$
2|\Lambda|\cdot |A^m(x^{rM^{-1}})|\cdot |B|^m\cdot |S^m|
=2M|B|^m\,|A^m(x^{rM^{-1}})|,
$$
by (\ref{ss-e6}). It follows that if (\ref{ss-e10}) holds, then we must have
at least one of the following:
\begin{equation}\label{ss-e11}
|1-x|=|1- e^{-2\pi iMy/r}|\leq \Delta^{1-\epsilon},
\end{equation}
\begin{equation}\label{ss-e12}
|Q(x^{1/MK^m})|=|Q(e^{-2\pi iy/K^mr})|\leq\Delta^\epsilon.
\end{equation}
If (\ref{ss-e11}) holds, then 
$My/r\in \zz+(-\Delta^{1-\epsilon},\Delta^{1-\epsilon})$, so that
$y\in rM^{-1}\zz+(-c\delta^{1-\epsilon},c\delta^{1-\epsilon})$.
Suppose now that (\ref{ss-e12}) holds.  Clearly, this can only happen if 
$Q\neq\ $const.  
Let $z_j=r_je^{-2\pi i\sigma_j}$, $j=1,2,\dots,J$, be the distinct roots of $Q$;
we have $J\leq r+q$, and each $z_j$ has multiplicity at most $r+q$.
In order for (\ref{ss-e12}) to hold, we must have for some $j$
$$|x^{1/MK^m}-z_j|=|e^{-2\pi iy/K^mr}-r_je^{-2\pi i\sigma_j}|
\leq C\Delta^{\epsilon(r+q)^{-1}},
$$
hence in particular there is a $k\in\zz$ such that
$$
\frac{y}{rK^m}\in (\sigma_j+k
-C\Delta^{\epsilon(r+q)^{-1}},\sigma_j+k+C\Delta^{\epsilon(r+q)^{-1}}).
$$
But we are assuming that $y\in[0,K^m]$, so that $\frac{y}{rK^m}\in
[0,r^{-1}]$ and for each $j$ there is at most one $k=k(j)$ such that the above
interval intersects $[0,r^{-1}]$.  We let $Y$ be the union of these intervals
rescaled by $rK^m$.

\end{proof}


\subsection{Proof of Lemma \ref{zz.lemma1}}\label{tiling-sub3}

The proof below combines the periodicity argument of \cite{NPV} with the analysis
of the last two subsections.
Let $y=K^{-n}\xi$ and
\begin{equation}\label{gg-e2}
\delta=c_0 \lambda^{-\sigma}|X|^{\sigma},
\end{equation}
where $c_0$ is a sufficiently small constant and
\begin{equation}\label{gg-e10}
\sigma=(1+r+q)\gamma^{-1}.
\end{equation}
We need to prove that
\begin{equation}\label{gg-e1}
I_2\leq cK^m,
\end{equation}
where
\begin{equation}\label{zz.e101}
I_2=\frac{1}{|X'|}\int_{X'}\int_{[1,K^{m}]\cap K^{-n}Z_\delta}
K^n|\widehat{\nu^n}(K^{n}y)|^2 dy\,dt,
\end{equation}
$c$ is a small constant, and
$$Z_\delta=\{\xi:\ |\widehat{\nu_n^{m+n}}(\xi)|\leq K^{-2m}\delta^2\}.$$ 

We first use Lemma \ref{ss-lemma1} to give a description of the set $Z_\delta$.

\begin{lemma}\label{zz.lemma2}
Let $\epsilon\in(0,1)$ be a number to be fixed later.  We have
$$
K^{-n}Z_\delta\subset Y\cup t^{-1}Y\cup I_\delta\cup t^{-1}I_\delta,
$$
where 
$$I_\delta =rM^{-1}\zz+(-c\delta^{1-\epsilon},c\delta^{1-\epsilon}),$$
and $Y$ is the union of at most $r+q$ intervals of length at most 
$CK^m\delta^{\epsilon(r+q)^{-1}}$.
\end{lemma}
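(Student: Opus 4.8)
The plan is to unpack the definition of $Z_\delta$ and translate the condition $|\widehat{\nu_n^{m+n}}(\xi)|\leq K^{-2m}\delta^2$ into two separate one-variable conditions, each amenable to Lemma \ref{ss-lemma1}. First I would recall that $\widehat{\nu_n^{m+n}}$ is, after the obvious rescaling $\xi = K^n y$, essentially the Fourier transform of the rescaled measure $\nu^m[t]$, so that up to a harmless factor
$$
\widehat{\nu_n^{m+n}}(K^n y) = K^{-m}\sum_{a\in A^m,\ b\in B^m} e^{-2\pi i(a+tb)y}
= \left(K^{-m}A^m(e^{-2\pi i y})B^m(e^{-2\pi i ty})\right),
$$
where I abuse notation writing $A^m(x)$, $B^m(x)$ for the generating-function-type trigonometric sums over $A^m$, $B^m$. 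Crucially this factors as a product of a term depending only on $A^m$ at frequency $y$ and a term depending only on $B^m$ at frequency $ty$. Since $|K^{-m}A^m(e^{-2\pi i y})B^m(e^{-2\pi i ty})| = (|A|^{-m}|A^m(e^{-2\pi i y})|)\cdot(|B|^{-m}|B^m(e^{-2\pi i ty})|)$ and each factor is at most $1$ in modulus, the condition $|\widehat{\nu_n^{m+n}}(K^n y)| \leq K^{-2m}\delta^2 \lesssim \delta^2$ forces at least one of
$$
|A|^{-m}|A^m(e^{-2\pi i y})| \leq C\delta, \qquad |B|^{-m}|B^m(e^{-2\pi i ty})| \leq C\delta,
$$
simply because if both factors exceeded $C\delta$ their product would exceed $C^2\delta^2$, contradicting the bound for $c_0$ small enough. (One should be slightly careful about the $K^{-2m}$ versus $\delta^2$ bookkeeping, but since $K^m \approx C\lambda$ and $\delta = c_0\lambda^{-\sigma}|X|^\sigma \leq c_0$, the power of $K^{-m}$ is more than absorbed; I would just check that $K^{-2m}\delta^2 \leq (C^{-1}\delta)^2$ holds for the relevant ranges, which it does.)

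Next I would apply Lemma \ref{ss-lemma1} twice. In the first case, $|A^m(e^{-2\pi i y})||B|^m \leq C\delta \cdot K^{2m}|A|^m/|B|^m$ — wait, more simply, the hypothesis $|A|^{-m}|A^m(e^{-2\pi i y})|\leq C\delta$ reads $|A^m(e^{-2\pi i y})||B|^m \leq C\delta K^m$ after using $|A|^m|B|^m = K^m$, i.e. $|A^m(e^{-2\pi i y})||B|^m \leq \delta'$ with $\delta' = C\delta K^m \approx C'\lambda\delta$; adjusting the constant $c_0$ in $\delta$ and noting $y\in[0,K^m]$ since $\xi\in[1,K^{m+n}]$ forces $y = K^{-n}\xi \in [K^{-n}, K^m]$, Lemma \ref{ss-lemma1} applies directly and places $y$ in $Y \cup (I_\delta \cap [0,K^m])$ with $Y$ a union of at most $r+q$ intervals of length at most $CK^m(\delta')^{\epsilon(r+q)^{-1}} \approx CK^m \delta^{\epsilon(r+q)^{-1}}$ (absorbing the extra $\lambda \approx K^m$ factor into the constant, since $K^m$ is already present). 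In the second case, the roles of $A$ and $B$ are swapped and the frequency is $ty$ rather than $y$; applying the (symmetric) analogue of Lemma \ref{ss-lemma1} to $B^m$ at frequency $ty$ — which ranges over $[0,K^m]$ since $0\le t\le 1$ — places $ty \in Y' \cup I_\delta'$, hence $y \in t^{-1}Y' \cup t^{-1}I_\delta'$. Renaming $Y' \cup$ (the $A$-side set) as $Y$ and likewise for $I_\delta$, and noting that a union of boundedly many intervals is again a union of boundedly many intervals (of the same length bound up to constants), gives $K^{-n}Z_\delta \subset Y \cup t^{-1}Y \cup I_\delta \cup t^{-1}I_\delta$ as claimed.

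The main obstacle — really the only nonroutine point — is verifying that Lemma \ref{ss-lemma1} as stated, which is written for the set $A$ with complement structure coming from $\theta_0$, applies symmetrically with $B$ in place of $A$. This requires that the tiling analysis of Subsection \ref{tiling-sub2}, in particular the complementing-set identity and the generating-function factorization (\ref{ss-e5}), be symmetric in the two factors. It is: the set $MK^m(\Lambda + rM^{-1}A^m + qM^{-1}B^m)$ complements $MK^mS^m$ modulo $MK^m$, and this identity treats the $A^m$ and $B^m$ contributions on equal footing (the only asymmetry is the coefficients $r$ versus $q$, which Lemma \ref{ss-lemma1} already accommodates via the symmetric quantity $r+q$). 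So the same multiplication-and-estimate argument, now isolating the $B^m$ factor rather than the $A^m$ factor in (\ref{ss-e7}), yields the $B$-analogue with an identical conclusion. I would state this explicitly as a one-line remark rather than reprove it. Everything else — the factorization of $\widehat{\nu_n^{m+n}}$, the pigeonhole on the two factors, and the rescaling by $t^{-1}$ — is bookkeeping.
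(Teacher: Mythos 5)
Your high-level approach matches the paper exactly: write $\widehat{\nu_n^{m+n}}(K^n y)=K^{-m}A^m(e^{-2\pi iy})B^m(e^{-2\pi ity})$, pigeonhole on which factor is small, and invoke Lemma~\ref{ss-lemma1} twice (once for $A$ at frequency $y$, once for the symmetric $B$-version at frequency $ty$). Your observation that the $B$-analogue of Lemma~\ref{ss-lemma1} holds by the symmetric role of $rM^{-1}A^m$ and $qM^{-1}B^m$ in~(\ref{ss-e5}) is correct and worth saying; the paper leaves this implicit. Your identification of the domains $y\in[1,K^m]$ and $ty\in[0,K^m]$ is also right.

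However, there is a genuine quantitative gap, and it comes from discarding the factor $K^{-2m}$ early. You normalize the product as $(|A|^{-m}|A^m|)(|B|^{-m}|B^m|)\leq K^{-2m}\delta^2$, observe each factor is at most $1$, and then replace the right-hand side by $\lesssim\delta^2$, concluding only $|A|^{-m}|A^m|\leq C\delta$, equivalently $|A^m||B|^m\leq C\delta K^m$. You then apply Lemma~\ref{ss-lemma1} with the enlarged parameter $\delta'\approx\lambda\delta$, and assert that the resulting interval length $CK^m(\delta')^{\epsilon/(r+q)}$ is ``$\approx CK^m\delta^{\epsilon/(r+q)}$'' by ``absorbing the extra $\lambda\approx K^m$ factor into the constant.'' This is false: the extra factor is $\lambda^{\epsilon/(r+q)}\approx K^{m\epsilon/(r+q)}$, which is not a constant (it grows with $N$), and the same problem appears in the width of $I_\delta$, where $\delta^{1-\epsilon}$ becomes $(\lambda\delta)^{1-\epsilon}$. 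As written, you do not obtain the conclusion of Lemma~\ref{zz.lemma2}, and the downstream estimates for $I_3,\dots,I_6$ in the proof of Lemma~\ref{zz.lemma1} would not close with your weaker bound.

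The fix is simply not to throw away $K^{-2m}$. The exponent $2$ in $K^{-2m}$ is chosen precisely so that it cancels the normalization: from $K^{-m}|A^m||B^m|\leq K^{-2m}\delta^2$ and $K^m=|A|^m|B|^m$, one gets directly
$$
\bigl(|A^m(e^{-2\pi iy})|\,|B|^m\bigr)\cdot\bigl(|B^m(e^{-2\pi ity})|\,|A|^m\bigr)=K^m|A^m||B^m|\leq\delta^2,
$$
so the pigeonhole yields one factor $\leq\delta$, which is exactly the hypothesis of Lemma~\ref{ss-lemma1} (and its $B$-analogue) with no enlargement of $\delta$. With this correction your argument coincides with the paper's.
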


\begin{proof}
We have
\begin{align*}
\widehat{\nu_n^{m+n}}(K^n y)
&=K^{-m}\sum_{a\in A_n^{m+n},b\in B_n^{m+n}}e^{-2\pi i(a+tb)K^n y}
\\
&=K^{-m}\sum_{a\in A^{m},b\in B^{m}}e^{-2\pi i(a+tb)y}
\\
&=K^{-m}A^m(e^{-2\pi iy})B^m(e^{-2\pi ity}).
\\
\end{align*}

Suppose that $y\in K^{-n}Z_\delta$, then
\begin{align*}
\delta^2 &\geq K^{2m}|\widehat{\nu_n^{m+n}}(K^n y)|\\
&=K^{m}|A^m(e^{-2\pi iy})||B^m(e^{-2\pi ity})|\\
&=|A^m(e^{-2\pi iy})||B|^m\cdot |B^m(e^{-2\pi ity})||A|^m.\\
\end{align*}

Thus at least one of $|A^m(e^{-2\pi iy})||B|^m$ and $|B^m(e^{-2\pi ity})||A|^m$
is less than $\delta$.  The conclusion follows by applying 
Lemma \ref{ss-lemma1} to both terms.

\end{proof}

{\em Proof of (\ref{gg-e1})}.  
We estimate 
\begin{align*}
I_2 
&\lesssim \frac{K^n}{|X'|}\int_0^1 dt 
\int_{(I_{\delta}\cup t^{-1}I_\delta\cup Y\cup t^{-1}Y)\cap[1,K^m]}
|\widehat{\nu^n}(K^n y)|^2dy
\\
&= \frac{K^{-n}}{|X'|} \int_0^1 dt 
\int_{(I_{\delta}\cup t^{-1}I_\delta\cup Y\cup t^{-1}Y)\cap[1,K^m]}
|A^n(e^{-2\pi iK^ny})B^n(e^{-2\pi itK^ny})|^2 dy
\\
&\leq \frac{K^{-n}}{|X'|} \int_0^1 dt \int_{y\in (I_{\delta}\cup Y)\cap[1,K^m]}\dots dy
+ \frac{K^{-n}}{|X'|} \int_0^1 dt \int_{ty\in (I_\delta\cup Y)\cap[0,K^m]}\dots dy.
\\
\end{align*}
Changing variables $(y,t)\to (y,u)$, where $u=ty$, $du=ydt$, we estimate
the last line by
\begin{align*}
&\frac{K^{-n}}{|X'|}\Big(\int_{(I_{\delta}\cup Y)\cap[1,K^m]} 
|A^n(e^{-2\pi iK^ny} )|^2\frac{dy }{y}\Big)
\Big(\int_0^{K^m} |B^n(e^{-2\pi iK^n u})|^2du\Big)
\\
&+\frac{K^{-n}}{|X'|}\Big(\int_{(I_{\delta}\cup Y)\cap[1,K^m]} 
|B^n(e^{-2\pi iK^n u})|^2 du\Big)
\Big(\int_1^{K^m} |A^n(e^{-2\pi iK^ny})|^2\frac{dy}{y}\Big).
\\
\end{align*}
Note that $A^n(e^{-2\pi iK^ny})$ is $1$-periodic and that for any interval
$J$ of length 1 we have
\begin{equation}\label{zz.e105}
\int_J |A^n(e^{-2\pi iK^ny})|^2\,dy=\sum_{a,a'\in A^n}\int_J e^{2\pi i(a-a')K^ny}dy
=|A|^n,
\end{equation}
since all terms with $a\neq a'$ vanish.  Using also that 
$y\geq 1$ on the region of 
integration, and applying the same argument to $B^n$, we get that
$$ I_2\lesssim 2 \frac{K^{m-n}}{|X'|}\big( (I_3+I_4)|B|^n+(I_5+I_6)|A|^n\big),$$
where
\begin{equation*}
I_3=\int_{I_{\delta}\cap[1,K^m]} 
|A^n(e^{-2\pi iK^n y})|^2 dy,
\end{equation*}
\begin{equation*}
I_4=\int_Y |A^n(e^{-2\pi iK^n y})|^2 dy,
\end{equation*}
\begin{equation*}
I_5=\int_{I_{\delta}\cap[1,K^m]} 
|B^n(e^{-2\pi iK^n u})|^2 du,
\end{equation*}
\begin{equation*}
I_6=\int_Y |B^n(e^{-2\pi iK^n u})|^2 du,
\end{equation*}
Since $|A||B|=K$ and $|X'|\geq |X|/2$, (\ref{gg-e1}) will follow if we can prove that
\begin{equation}\label{zz.e2}
\max (I_3,I_4)\leq c|A|^n |X|, \ 
\max (I_5,I_6)\leq c|B|^n |X|, \ 
\end{equation}
for some small enough $c$.   We only consider $I_3$ and $I_4$, the case
of $I_5$ and $I_6$ being similar with $A$ and $B$ interchanged.

We fix 
$$\epsilon=\frac{r+q}{1+r+q}.$$
Assume first that 
\begin{equation}\label{gg-e100}
c\delta^{1-\epsilon}\geq 10K^{-n}
\end{equation} 
and start with $I_3$.
By (\ref{gg-e100}), we may choose $j$ so that $K^{j-n}\approx c\delta^{1-\epsilon}
=c\delta^{(1+r+q)^{-1}}$.
Write $A^n(x)=A^n_j(x)A^j(x)$ 
with the implicit constant small enough.  
Note that $A^j(e^{-2\pi iK^ny})$ is $K^{j-n}$-periodic,
hence for any interval $J$ of length $c\delta^{(1+r+q)^{-1}}$ we have
$$\int_J|A^j(e^{-2\pi iK^ny})|^2\,dy
\approx \delta^{(1+r+q)^{-1}} \int_0^1 |A^j(e^{-2\pi iK^ny})|^2\,dy
=\delta^{(1+r+q)^{-1}} |A|^j,$$
by the same argument as in (\ref{zz.e105}). 
Since the region of integration in $I_3$ is a union of about $K^m$ such intervals, and
since $|A^n_j(e^{2\pi i\xi})|\leq |A|^{n-j}$, it follows that
$$
I_3\lesssim |A|^{2(n-j)}\,K^m\delta^{(1+r+q)^{-1}}|A|^j
=|A|^{2n-j}\,K^m\,\delta^{(1+r+q)^{-1}}.
$$
Recall that $K^m\approx\lambda$.  By the choice of $j$, and using
that $|A|^{n-j}\leq K^{(n-j)(1-\gamma)}\lesssim
\delta^{(\gamma-1)(1+r+q)^{-1}}$, we get that
\begin{align*}
I_3
&\lesssim \lambda \delta^{(\gamma-1)(1+r+q)^{-1}}\delta^{(1+r+q)^{-1}}|A|^n\\
&=\lambda \delta^{\gamma(1+r+q)^{-1}}|A|^n\\
&=\lambda \delta^{1/\sigma}|A|^n,
\end{align*}
which is less than $c|A|^n|X|$ thanks to (\ref{gg-e2}).

Next, we consider $I_4$. Recall that $Y$ is the union of at most $r+q$ intervals 
of length at most 
$$
CK^m\delta^{\epsilon(r+q)^{-1}}=K^m\delta^{(1+r+q)^{-1}}
\approx K^m(K^{-m}|X|)^{1/\gamma}
=K^{m(1-\frac{1}{\gamma})}|X|^{1/\gamma}.
$$
In particular, this is less than 1.  We may therefore choose a $k$ such that
$K^m\delta^{(1+r+q)^{-1}}\approx K^{k-n}$ with a small enough implicit
constant.  Writing $A^n(x)=A^n_k(x)A^k(x)$ and arguing as in the case of $I_3$, 
we see that
\begin{align*}
I_4&\lesssim |A|^{2(n-k)}K^m\delta^{(1+r+q)^{-1}}|A|^k
\\
&\approx |A|^{n-k}|A|^n\lambda \delta^{1/(1+r+q)}|A|^k.\\
\end{align*}
We have $|A|^{n-k}\leq K^{(n-k)(1-\gamma)}\leq (K^m\delta^{(1+r+q)^{-1}})^{\gamma-1}$,
hence
\begin{align*}
I_4&\lesssim K^{m(\gamma-1)}\lambda |A|^n\delta^{\gamma(1+r+q)^{-1}}
\\
&\lesssim \lambda |A|^n\delta^{\gamma(1+r+q)^{-1}}
\\
&=\lambda |A|^n\delta^{1/\sigma}.
\end{align*}
Thus $I_4\leq c|A|^n|X|$ whenever (\ref{gg-e2}) holds with a sufficiently small constant.

It remains to consider the case when (\ref{gg-e100}) fails.  In this case, we bound $I_3$
by
$$
I_3\leq |I_\delta\cap[1,K^m]|\cdot |A|^{2n}\lesssim K^{m}\delta^{1-\epsilon}|A|^{2n}.
$$
It suffices to prove that this is less than $c|A|^n|X|$, or equivalently that
\begin{equation}\label{gg-e101}
C\delta^{1-\epsilon}|A|^n\leq K^{-m}|X|
\end{equation}
if the constant $c_0$ in (\ref{gg-e2}) has been chosen small enough depending on $C$.  Using that 
$\lambda\approx K^m$ and $|A|\leq K^{1-\gamma}$, we see that (\ref{gg-e101}) will follow
if 
$$C\delta^{1-\epsilon}K^{(1-\gamma)n}\leq \lambda^{-1}|X|=(c_0^{-1}\delta)^{1/\sigma},$$
where at the last step we used (\ref{gg-e2}). This
is equivalent to
$$
CK^{(1-\gamma) n}\leq c_0^{-1/\sigma}\delta^{\frac{1}{\sigma}-1+\epsilon}
=c_0^{-1/\sigma}\delta^{-\frac{1-\gamma}{1+r+q}}.
$$
Using that $1-\epsilon=1/(1+r+q)$, we rewrite this as 
$$c_0^{1/\sigma(1-\gamma)}\delta^{1-\epsilon}\leq C^{-1/(1-\gamma)}K^{-n}.$$
But this is a consequence of the failure of (\ref{gg-e100}) if $c_0$ was chosen small enough.  
The verification for $I_4$ is identical.


\noindent{\sc Department of Mathematics, University of British Columbia, Vancouver,
B.C. V6T 1Z2, Canada}

\noindent{\it ilaba@math.ubc.ca, zkelan@math.ubc.ca}



\begin{thebibliography}{25}

\vskip.125in

\bibitem{BV} M. Bateman, A. Volberg, {\it An estimate from below for the Buffon 
needle probability of the four-corner Cantor set}, arXiv:0807.2953

\bibitem{CM} E. Coven, A. Meyerowitz, {\it Tiling the integers
with translates of one finite set}, J. Algebra 212 (1999),
161--174.

\bibitem{Falc} K. Falconer, {\it The geometry of fractal sets}, Cambridge
Tracts in Mathematics, vol. 85, Cambridge, 1986.

\bibitem{Ke} R. Kenyon, {\it Projecting the one-dimensional Sierpi\'nski gasket},
Israel J. Math. 79 (2006), 221--238.

\bibitem{LW1} J.C. Lagarias, Y. Wang, {\it Tiling the line with translates of
one tile}, Invent. Math. 124 (1996), 341--365.

\bibitem{LW2} J.C. Lagarias, Y. Wang, {\it Spectral sets and factorization of 
finite abelian groups}, J. Funct. Anal. 145 (1997), 73--98.

\bibitem{Mattila} P. Mattila, {\it Orthogonal projections, Riesz capacities, and 
Minkowski content}, Indiana Univ. Math. J. 124 (1990), 185--198.

\bibitem{NPV} F. Nazarov, Y. Peres, A. Volberg, {\it The power law for the buffon needle
probability of the four-corner Cantor set}, arXiv:0801.2942, to appear in St. 
Petersburg Math. J. 

\bibitem{New} D.J. Newman, {\it Tesselation of integers,} J. Number Theory
9 (1977), 107--111. 

\bibitem{PS1} Y. Peres, B. Solomyak, {\it How likely is buffon's needle to fall near a
planar Cantor set?}, Pacific J. Math. {\bf 24} (2002), 473--496.

\bibitem{S1} A. Sands, {\it On Keller's conjecture for certain cyclic
groups}, Proc. Edinburgh Math. Soc. 2 (1979), 17--21.

\bibitem{Sz} S. Szab\'o, {\it A type of factorization of finite abelian groups},
Discrete Math. 54 (1985), 121--124.

\bibitem{Tao} T. Tao, {\it A quantitative version of the Besicovitch projection
theorem via multiscale analysis}, arXiv:0706.2646, to appear in Proc. London Math. Soc.

\bibitem{Tij} R. Tijdeman, {\it Decomposition of the integers as a direct sum of
two subsets}, in {\em Number Theory (Paris 1992--1993)}, London Math. Soc.
Lecture Note Ser., vol. 215, 261--276, Cambridge Univ. Press, Cambridge, 1995.





\end{thebibliography}
\end{document}